\theoremstyle{plain} 
\newtheorem{thm}{Theorem}[section]
\newtheorem{cor}[thm]{Corollary}
\newtheorem{lem}[thm]{Lemma}
\newtheorem{prp}[thm]{Proposition}
\theoremstyle{definition}
\newtheorem{dfn}[thm]{Definition}
\newtheorem{exm}[thm]{Example}
\newtheorem{rmk}[thm]{Remark}
\newtheorem*{cnj}{Conjecture}
\newtheorem*{prp-n}{Proposition}
\newtheorem*{thm-n}{Theorem}
\newtheorem*{lem-n}{Lemma}
\numberwithin{equation}{section}
\def\al{\alpha}
\def\be{\beta}
\def\ga{\gamma}
\def\de{\delta}
\def\ep{\varepsilon}
\def\ze{\zeta}
\def\et{\eta}
\def\la{\lambda}
\def\ro{\rho}
\def\si{\sigma}
\def\ta{\tau}
\def\ph{\phi}
\def\ps{\psi}
\def\La{\Lambda}
\def\Om{\Omega}
\def\Ph{\Phi}
\def\Ps{\Psi}
\def\Ker{\operatorname{Ker}}
\def\Im{\operatorname{Im}}
\def\Aut{\operatorname{Aut}}
\def\End{\operatorname{End}}
\def\Mod{\operatorname{Mod}}
\def\calA{{\mathcal A}}
\def\calB{{\mathcal B}}
\def\calC{{\mathcal C}}
\def\calS{{\mathcal S}}
\def\calX{{\mathcal X}}
\def\bbZ{{\mathbb Z}}
\def\op{^{\mathrm{op}}} 
\def\inv{^{-1}}
\def\implies{\text{$\Rightarrow$}\ }
\def\impliedby{\text{$\Leftarrow$}\ }
\def\equivalent{\text{$\Leftrightarrow$}\ }
\def\incl{\hookrightarrow}
\def\iso{\cong}
\def\ovl{\overline}
\def\Ds{\bigoplus}
\def\dsm#1,#2..#3{\bigoplus_{{#1}={#2}}^{#3}}
\def\sm#1,#2..#3{\sum_{{#1}={#2}}^{#3}}
\def\id{1\kern-.25em{\text{{\rm l}}}} 
\def\isoto{\ \raise.8ex\hbox{$^{\sim}$}\kern-.7em\hbox{$\to$}\ }
\def\ang#1{{\langle #1 \rangle}}
\def\ya#1{\xrightarrow{#1}}
\def\ay#1{\xleftarrow{#1}}
\def\blank{\operatorname{-}}
\def\bg{%
\family{cmr}\size{20}{12pt}\selectfont}
\def\bigzerou{%
\smash{\lower1.7ex\hbox{\bg 0}}}
\def\bph{\bar{\ph}}
\def\bps{\bar{\ps}}
\numberwithin{equation}{section}
\numberwithin{figure}{section}
\def\GCat{G\text{-}\mathbf{Cat}}
\def\GGrCat{G\text{-}\mathbf{GrCat}}
\def\ZCat{\bbZ\text{-}\mathbf{Cat}}
\def\ZGrCat{\bbZ\text{-}\mathbf{GrCat}}
\def\k{\Bbbk}
\def\To{\Rightarrow}
\def\incl{\hookrightarrow}
\def\autz{\Aut^0(\hat{A})}
\def\upbl{^{(\blank)}}
\title[On isomorphisms of generalized multifold extensions]%
{On isomorphisms of generalized multifold extensions of algebras without nonzero oriented cycles}
\author{H.~Asashiba, M.~Kimura, K.~Nakashima, M.~Yoshiwaki}
\address{
H.~Asashiba,
Department of Mathematics,
Faculty of Science,
Shizuoka University,
836 Ohya, Suruga-ku,
Shizuoka, 422-8529, Japan
}
\email{asashiba.hideto@shizuoka.ac.jp} 
\address{
M.~Kimura,
c/o H.~Asashiba,
Department of Mathematics,
Faculty of Science,
Shizuoka University,
836 Ohya, Suruga-ku,
Shizuoka, 422-8529, Japan
}
\email{shirakawasanchi@gmail.com}
\address{
K.~Nakashima,
Department of Mathematics,
Faculty of Science,
Shizuoka University,
836 Ohya, Suruga-ku,
Shizuoka, 422-8529, Japan
}
\email{nakashima.ken@shizuoka.ac.jp} 
\address{
M.~Yoshiwaki,
Department of Mathematics,
Faculty of Science,
Shizuoka University,
836 Ohya, Suruga-ku,
Shizuoka, 422-8529, Japan
\\
Osaka City University Advanced Mathematical Institute,
3-3-138 Sugimoto, Sumiyoshi-ku, 
Osaka, 558-8585, Japan.
}
\email{yoshiwaki.michio@shizuoka.ac.jp} 
\thanks{
This work is partially supported by Grant-in-Aid for Scientific Research 25610003 and 25287001 from JSPS (Japan Society for the Promotion of Science), and by JST (Japan Science and Technology Agency) CREST Mathematics (15656429).
}
\keywords{repetitive categories, orbit categories, algebra isomorphisms, group actions, graded categories}
\subjclass[2010]{16W20, 16W22, 16W50}
\begin{document}
\maketitle
\begin{abstract}
Assume that a basic algebra $A$ over an algebraically closed field $\k$ with a basic set $A_0$ of primitive idempotents has the property that $eAe=\k$ for all $e \in A_0$.
Let $n$ be a nonzero integer, and $\ph$ and $\ps$ 
two automorphisms of the repetitive category $\hat{A}$ of $A$
with {\it jump} $n$ (namely, they send $A^{[0]}$ to $A^{[n]}$, where $A^{[i]}$ is the $i$-th copy of $A$ in $\hat{A}$ 
for all $i \in \bbZ$).
If $\ph$ and $\ps$ coincide on the objects and if there exists a map $\ro \colon A_0 \to \k$
such that $\ro_0(y)\ph_0(a)=\ps_0(a)\ro _0(x)$ for all morphisms $a\in A(x,y)$,
then the orbit categories $\hat{A}/\ang{\ph}$ and $\hat{A}/\ang{\ps}$ are isomorphic 
as $\bbZ$-graded categories.

\end{abstract}
\section*{Introduction}

Throughout this paper $G$ is a group, and we fix an algebraically closed field $\k$,
and all categories and functors are assumed to be $\k$-linear, and
we assume that all algebras are basic, connected, finite-dimensional $\k$-algebras.
We identify each algebra $A$ with a locally bounded category
whose object set $A_0$ is given by a fixed complete set of
orthogonal primitive idempotents of $A$.
Therefore we assume that $A \ne 0$ and that automorphisms of $A$  send
$A_0$ to $A_0$. 

For a category $\calC$ we denote by $\Aut(\calC)$ the group of automorphisms of $\calC$. 
A pair $(\calC ,X)$ of a category and a group homomorphism $X:G\to \Aut (\calC)$
(we write $X_\al :=X(\al )$) is called a category with $G$-action.
Let  $(\calC ,X)$ be a category with a $G$-action.
We do not assume that the $G$-action $X$ is free in general.
In that case, we take the definition of the orbit category $\calC/G$
in \cite{Asa}.
If $\calC$ is locally bounded and the $G$-action is free, then we take the classical definition of the orbit category $\calC/G$ as in \cite{Gab},
in which case $\calC/G$ is a basic category.
We make full use of the fact that equivalences between basic categories turn out to be isomorphisms (e.g., Theorem \ref{gme-iso}, \ref{thm:cyc-orbit}).

A {\em generalized multifold $($more precisely, $n$-fold\/$)$ extension} (\cite{AK}) of an algebra $A$
is a category of  the form $\La:= \hat{A}/\ang{\ph}$, where $\hat{A}$ is the repetitive category of $A$,
$\ph$ is an automorphism of $\hat{A}$ with {\em jump} $n$ for some integer $n$
 (see Definition \ref{dfn:repet-cat}(4) and Proposition \ref{jump}), and
$\hat{A}/\ang{\ph}:= (\hat{A}, \ph)/\bbZ$ is the orbit category of the category
$\hat{A}$ with a $\bbZ$-action given by $\ph$.
Note that the category $\La$ has a finite number of objects (i.e., $\La$ is an algebra)
if and only if $n \ne 0$.
The category $\La$ is called a {\em twisted multifold extension} (\cite{Asa02}) of $A$ and denoted by
$T_{\ph}^n(A)$
if the automorphism $\ph$ above has the form $\hat{\ro}\nu_A^n$, where
$\hat{\ro}$ is the automorphism of $\hat{A}$ naturally induced from $\ro$ and
$\nu _{A}$ is the Nakayama automorphism of $\hat{A}$.
An algebra $A$ is called a {\em piecewise hereditary} algebra
if $A$ is derived equivalent to a hereditary algebra $H$,
further in that case $A$ is said to be {\em of tree type}
if the ordinary quiver of $H$ is an oriented tree.

In \cite{AK}, 
we have given a derived equivalence classification of generalized multifold extension
algebras of piecewise hereditary algebras of tree type,
which extends the derived equivalence classification of twisted multifold extension algebras
of piecewise hereditary algebra of tree type
given in \cite{Asa02}.
A key point of the proof was to show that
the generalized multifold extension $\hat{A}/\ang{\ph}$
of a piecewise hereditary algebra $A$
by an automorphism $\ph$ of $\hat{A}$ with jump $0 \ne n\in \bbZ$ is
derived equivalent to the twisted multifold extension
$T_{\ph _0}^n(A)$ of $A$,
where $\ph _0:=(\id ^{[0]})^{-1}\ph\nu _{A}^{-n} \id ^{[0]}$ of $A$
(see Definition \ref{dfn:repet-cat}(3) for the definition of $\id ^{[0]}$).
For algebras $A$ in examples that we checked
the algebras $\hat{A}/\ang{\ph}$ and $T^n_{\ph_0}(A)$ were very similar
and seemed to be not only derived equivalent but also isomorphic.
This led us to the following conjecture.
\begin{cnj}
If $A$ is a piecewise hereditary algebra of tree type,
then the algebras $\hat{A}/\ang{\ph}$ and $T^n_{\ph_0}(A)$ would be isomorphic.
\end{cnj}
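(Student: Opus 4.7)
The plan is to exhibit an automorphism $\sigma$ of $\hat{A}$ with jump $0$ that is the identity on objects and satisfies $\sigma\ph = \ps\sigma$. Such a $\sigma$ descends to a $\bbZ$-graded functor $\hat{A}/\ang{\ph} \to \hat{A}/\ang{\ps}$ that is bijective on objects and morphisms, so by the general principle (used throughout the paper) that equivalences between basic categories are isomorphisms, it will be a $\bbZ$-graded isomorphism.

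To set up $\sigma$, I would first pin down its possible shape. Since $\sigma$ fixes every object and $eAe = \k$ for every $e \in A_0$, functoriality on the $A$-summand of $\hat{A}$ forces $\sigma$ to act on each $\hat{A}(x^{[i]}, y^{[i]}) = A(x,y)$ as multiplication by a coboundary scalar $f_i(y)/f_i(x)$ for some $f_i \colon A_0 \to \k^{\times}$, and the compatibility with the $A$-bimodule structure on $DA$ then forces the scaling on each $\hat{A}(x^{[i]}, y^{[i+1]}) = DA(x,y)$ to take the form $c_i f_{i+1}(y)/f_i(x)$ for some $c_i \in \k^{\times}$. Conversely, any tuple $(f_i, c_i)_{i \in \bbZ}$ of nonzero scalars defines in this way a well-defined automorphism of $\hat{A}$ with jump $0$, so finding $\sigma$ reduces to choosing such a tuple.

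Imposing $\sigma\ph = \ps\sigma$ on a degree-$0$ morphism $a \in A(x,y) \subset \hat{A}(x^{[i]}, y^{[i]})$ and using the hypothesis $\ro(y)\ph_0(a) = \ro(x)\ps_0(a)$ reduces, by the connectedness of the quiver of $A$, to the recursion
\begin{equation*}
    f_{i+n}(\ph_0(x)) \;=\; D_i\, \ro(x)\, f_i(x) \qquad (x \in A_0,\ i \in \bbZ)
\end{equation*}
with free parameters $D_i \in \k^{\times}$. Prescribing $f_i$ freely on $|n|$ consecutive layers $i$ (and $(D_i)$ freely) and propagating by the recursion produces an admissible family $(f_i)_{i \in \bbZ}$, solving the intertwining in degree $0$.

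The main obstacle is the analogous intertwining condition on degree-$1$ morphisms $b \in DA(x,y) \subset \hat{A}(x^{[i]}, y^{[i+1]})$, which the hypothesis does not directly address. The plan here is to exploit the rigidity of the $A$-bimodule $DA \iso \Hom_\k(A,\k)$, which admits only scalar bimodule self-equivalences by the connectedness of $A$ together with $eAe = \k$: consequently the degree-$1$ action of any jump-$n$ automorphism of $\hat{A}$ is determined by its degree-$0$ action up to a single scalar per layer. A direct bimodule computation then shows that the ratio of the degree-$1$ actions of $\ps$ and $\ph$ on $DA(x,y)^{[i]}$ factors as $\la_i \cdot \ro(y)/\ro(x)$ for some $\la_i \in \k^{\times}$ depending only on $i$, and the residual equation coming from $\sigma\ph = \ps\sigma$ on degree $1$ reduces to the solvable condition $c_{i+n}D_{i+1}/(c_i D_i) = \la_i$. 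With the free parameters $(c_i)$ and $(D_i)$ chosen to satisfy this last condition, $\sigma$ intertwines $\ph$ and $\ps$ on all of $\hat{A}$, and the induced functor preserves the $\bbZ$-grading because $\sigma$ has jump $0$, completing the proof.
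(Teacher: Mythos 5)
Your overall strategy is genuinely different from the paper's and, for $n\ne 0$, viable: instead of producing a natural isomorphism $\et\colon\ps\To\ph$ and passing through the equivariant orbit-category machinery (Proposition \ref{orbitcat-iso}, Theorem \ref{orbitcat-eq}), you try to build an honest automorphism $\sigma$ of $\hat{A}$, identity on objects, with $\sigma\ph=\ps\sigma$, which would even show that $\ph$ and $\hat{\ph}_0\nu_A^n$ are conjugate in $\Aut(\hat{A})$; the device of prescribing the data on $|n|$ consecutive layers and propagating works precisely because $n\ne 0$. The first genuine gap, however, is that the conjecture is about piecewise hereditary algebras of tree type, while your entire construction is predicated on the hypothesis $eAe=\k$ for all $e\in A_0$, which you never derive. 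Establishing this is a substantive half of the paper's argument (Lemma \ref{pwh-nocyc}, using the H\"ugel--Koenig--Liu result that $eAe\iso\End(T_e)$ is again piecewise hereditary for a partial tilting complex $T_e$, together with Lemma \ref{lem:fgld-local} that a local algebra of finite global dimension is $\k$); without it your standing hypothesis is unjustified for the algebras in the statement.

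Second, two of your reductions are not correct as stated. The claim that an identity-on-objects automorphism is \emph{forced} to act on each $\hat{A}(x^{[i]},y^{[i]})$ by a coboundary scalar $f_i(y)/f_i(x)$ is false (if $\dim A(x,y)\ge 2$ the action need not be scalar, and scalar systems need not be coboundaries --- compare condition (2)(b) of Proposition \ref{orbitcat-iso}); only the converse, which is what you actually use, holds, so this is harmless. More seriously, your degree-$0$ recursion $f_{i+n}(\ph_0(x))=D_i\,\ro(x)f_i(x)$ and the degree-$1$ factorization by $\la_i\,\ro(y)/\ro(x)$ tacitly assume that the level-$i$ components of $\ph$ and $\ps$ are related by the \emph{same} pointwise scalars $\ro$ as the level-$0$ components; in general they differ by further level-dependent coboundary factors (in the paper's notation, $\ph^{-1}\ps\,\widehat{\xi(\ro_0)}=\Ph(\la)$ with a nontrivial $\la\in(\k^\times)^{\hat{A}_0}$), so the displayed formulas are wrong as written. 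What rescues the scheme is exactly the structure theory of jump-$0$ automorphisms over an algebra without nonzero oriented cycles --- the split exact sequence of Proposition \ref{rigid-exact} and Remark \ref{jump-rmk}, which say that all levels differ from level $0$ only by pointwise scalar twists; your appeal to the rigidity of $DA$ covers the degree-$1$ layer but not this passage between levels in degree $0$. With the $eAe=\k$ input supplied and the recursions corrected by these extra (known, pointwise) scalars, your conjugation argument does go through and yields the conjecture, essentially re-proving Theorem \ref{gme-iso} in a strictified form for $n\ne 0$.
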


In this paper we will show that this conjecture is true as a direct consequence
of the following lemma and proposition.
We say that an algebra $A$ {\em has no nonzero oriented cycles} if
$eAe = \k$ for all primitive idempotents $e$ of $A$.
Then the following holds.

\begin{lem-n}[Lemma \ref{pwh-nocyc}]
If $A$ is a piecewise hereditary algebra of tree type,
then $A$ has no nonzero oriented cycles.
\end{lem-n}

Note that for each automorphism $\ph$ of $\hat{A}$ we have
$
\hat{A}/\ang{\ph}(x^{[i]}, y^{[j]}) \iso \Ds_{k\in \bbZ}\hat{A}(\ph^kx^{[i]}, y^{[j]})
$
for all objects $x^{[i]}, y^{[j]}$ of $\hat{A}$, which gives us a $\bbZ$-grading
of $\hat{A}/\ang{\ph}$ by setting
$(\hat{A}/\ang{\ph})^k(x^{[i]}, y^{[j]})$\linebreak[1]$ :=\hat{A}(\ph^kx^{[i]}, y^{[j]})$.
We always regard the orbit category $\hat{A}/\ang{\ph}$ as a $\bbZ$-graded
category by this grading.
Then we will show the following.

\begin{prp-n}[Corollary \ref{gen-twist}]
Assume that $A$ has no nonzero oriented cycles.
Let $\ph $ be an automorphism of $\hat{A}$ with jump $0 \ne n \in \bbZ$.
Then $\hat{A}/\ang{\ph }$ and $T_{\ph _0}^n(A)$ are isomorphic
as $\bbZ$-graded algebras.
\end{prp-n}

More generally, we give the following criterion.

\begin{thm-n}[cf.\ Theorem \ref{gme-iso}]
Assume that $A$ has no nonzero oriented cycles.
Let $\ph$ and $\ps$ be automorphisms of $\hat{A}$ with jump $n \in \bbZ$
that coincide on the objects of $\hat{A}$.
Then $\hat{A}/\ang{\ph}$ and $\hat{A}/\ang{\ps}$ are isomorphic
as $\bbZ$-graded algebras (equivalence of $\bbZ$-categories when $n = 0$) if 
there exists a map $\ro _0\colon A_0 \to \k^{\times}$ such that
$\ro _0(y)\ph _0(a)=\ps _0(a)\ro _0(x)$ for all morphisms $a\in A(x,y)$.
\end{thm-n}
\noindent
Essential point to prove this theorem is
to combine an idea in Saor\'in's paper \cite{Sa} and Theorem \ref{orbitcat-eq} taken from \cite{Asa}.

The paper is organized as follows.
After some preparations in section 1
we characterize automorphisms of repetitive categories with jump $0$ in section 2.
In section 3 we discuss on equivalences of orbit categories.
We note that we do not have to assume that actions of groups are free throughout this paper
because we use tools developed in \cite{Asa11, Asa}.
Therefore this paper extends the derived equivalence classification of generalized $n$-fold ($0\neq n\in \bbZ$) extensions
of piecewise hereditary algebras of tree type in \cite{AK} to that including the case of
$n=0$.
In section 4 we give a proof of the theorem above and discuss related topics.
In particular, we give another criterion (Theorem \ref{thm:cyc-orbit}) for isomorphisms (equivalences) of
orbit categories in terms of simple cycles in the quiver of an algebra.
Finally in section 5 we apply the results in the previous section to
piecewise hereditary algebras to have an affirmative answer (Corollary \ref{cor:answer-conj}) to the conjecture above.

\subsection*{Acknowledgments}

We would like to thank Junichi Miyachi for asking us the existing of examples
that gives a negative solution to the conjecture above,
Manuel Saor\'in for sending us his unpublished paper \cite{Sa}
(the published version of which is \cite{Sa02})
and Steffen Koenig for
informing us the proof of Lemmas \ref{lem:fgld-local} and \ref{pwh-nocyc}.

\section{Preliminaries}

Let $Q = (Q_0, Q_1, s, t)$ be a quiver, namely, $Q_0$ is the set of vertices,
$Q_1$ is the set of arrows, $s$ and $t$ are the maps sending each
arrow to its source and target, respectively.
Then for vertices $x, y \in Q_0$, we set
$Q_1(x, y):= \{\al \in Q_1 \mid s(\al) = x, t(\al) = y\}$.

For a category $R$ we denote by $R_0$ and $R_1$
the class of objects and morphisms of $R$, respectively.
A category $R$ is said to be {\em locally bounded}
if it satisfies the following:
\begin{itemize}
\item
Distinct objects of $R$ are not isomorphic;
\item
$R(x,x)$ is a local algebra for all $x \in R_0$;
\item
$R(x,y)$ is finite-dimensional for all $x,y \in R_0$; and
\item
The set $\{y \in R_0 \mid R(x, y) \ne 0 \text{ or }
R(y,x) \ne 0\}$ is finite for all $x \in R_0$.
\end{itemize}
A category is called {\em finite}
if it has only a finite number of objects.

\begin{rmk}
Note that any locally bounded category $R$ is presented as $R=\k Q/I$ with a unique quiver $Q$ and  an admissible ideal $I$ of $\k Q$.
For each $\al \in Q_1$ we denote by $\ovl{\al}$ the morphism $\al + I$ of $R$.
\end{rmk}

A pair $(A, E)$ of an algebra $A$ and 
a complete set $E:=\{e_1, \dots, e_n\}$
of orthogonal primitive idempotents of $A$
can be identified with a locally bounded and finite category
$R$ by the following correspondences.
Such a pair $(A, E)$ defines a category $R_{(A,E)}:=R$
as follows:
$R_0:=E$, $R(x, y):=yAx$ for all $x, y \in E$,
and the composition of $R$ is defined by the multiplication of $A$.
Then the category $R$ is locally bounded and finite.
Conversely, a locally bounded and finite category $R$ defines
such a pair $(A_{R}, E_{R})$ as follows:
$A_{R}:= \Ds_{x,y\in R_0}R(x, y)$ with the usual matrix
multiplication (regard each element of $A$ as
a matrix indexed by $R_0$), and
$E_{R}:=\{ (\id_x\de_{(i,j),(x,x)})_{i,j \in R_0} \mid x \in R_0\}$.
We always regard an algebra $A$
as a locally bounded and finite category
by fixing a complete set $A_0$ of orthogonal
primitive idempotents of $A$.
We say that a locally bounded category $R$ {\em has no nonzero oriented cycles} if
$R(x, x) = \k$ for all objects $x$ of $R$, which extends the corresponding notion for
algebras.

\begin{dfn}\label{dfn:repet-cat}
Let $A$ be a locally bounded category.

(1) The {\em repetitive category} $\hat{A}$ of $A$ is a $\k$-category defined as follows ($\hat{A}$ turns out to be locally bounded again):
\begin{itemize}
\item
$\hat{A}_0:=A_0 \times \bbZ =\{ x^{[i]}:=(x,i)\mid x\in A_0, i\in \bbZ \}$.
\item
$\hat{A}(x^{[i]},y^{[j]}):=\begin{cases}
                                  \{ f^{[i]}\mid f \in A(x,y)\} &\text{if\ } j=i,\\
                                  \{ \be^{[i]}\mid \be \in DA(y,x)\} &\text{if\ } j=i+1,\\
                                  0 &\text{otherwise},
                                 \end{cases}
\quad \text{for all $x^{[i]},y^{[j]}\in \hat{A}_0$.}$
\item
For each $x^{[i]},y^{[j]},z^{[k]}\in \hat{A}_0$
the composition $\hat{A}(y^{[j]},z^{[k]})\times \hat{A}(x^{[i]},y^{[j]})\to \hat{A}(x^{[i]},z^{[k]})$
 is given as follows.
\begin{itemize}
\item[(i)]If $i=j,j=k$, then this is the composition of $A$ $A(y,z)\times A(x,y)\to A(x,z)$.
\item[(ii)]If $i=j,j+1=k$, then this is given by the right $A$-module structure of $DA$: $DA(z,y)\times A(x,y)\to DA(z,x)$.
\item[(iii)]If $i+1=j,j=k$, then this is given by the left $A$-module structure of $DA$: $A(y,z)\times DA(y,x)\to DA(z,x)$.
\item[(iv)]Otherwise, the composition is zero.
\end{itemize}
\end{itemize}

(2) We define an automorphism $\nu_A$ of $\hat{A}$,
called the {\em Nakayama automorphism} of $\hat{A}$, by
$\nu_A(x^{[i]}):= x^{[i+1]}$, $\nu_A(f^{[i]}):= f^{[i+1]}$,
$\nu_A(\be^{[i]}):= \be^{[i+1]}$
for all $i \in \bbZ, x \in A_0, f \in A_1, \be \in \bigcup_{x,y\in A_0} DA(y,x)$.

(3) For each $i \in \bbZ$,
we denote by $A^{[i]}$ the full subcategory of $\hat{A}$ formed
by $x^{[i]}$ with $x \in A$, and by
$\id^{[i]} : A \isoto A^{[i]}, x \mapsto x^{[i]}$, an isomorphism of categories.
Further we define a bijection $\et_i\colon D(A(y,x)) \to \hat{A}(x^{[i]}, y^{[i+1]})$
by $\be \mapsto \be^{[i]}$ for all $x, y \in A_0$.

(4) For an integer $n$ we say that an automorphism $\ph$ of $\hat{A}$ has
a {\em jump} $n$ if $\ph(A^{[0]})=A^{[n]}$.
\end{dfn}

We cite the following \cite[Lemma 1.5, Proposition 1.6.]{AK}.

\begin{lem}\label{lem:com-Nak}
Let $A$ be an algebra.
Then the actions of $\ph \nu_A$ and $\nu_A\ph$
coincide on the objects of $\hat{A}$
for all $\ph \in \Aut (\hat{A})$.
\end{lem}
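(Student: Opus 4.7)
The plan is to exhibit $\nu_A$ as the Serre-duality (Nakayama) functor of $\hat{A}$ and then observe that any $\k$-linear automorphism of a category must commute, up to natural isomorphism, with the functor representing Serre duality. Once commutation holds up to natural isomorphism, equality on objects follows because $\hat{A}$ is locally bounded and distinct objects of a locally bounded category are non-isomorphic.

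First I would record the Serre duality isomorphism for $\hat{A}$: for each pair $x, y \in \hat{A}_0$ there is an isomorphism
\[
\eta_{x, y}\colon D\hat{A}(x, y) \xrightarrow{\sim} \hat{A}(y, \nu_A(x))
\]
that is natural in both variables. This is essentially the defining feature of the repetitive category. Unwinding Definition \ref{dfn:repet-cat}(1) for $x = x^{[i]}$ and $y = y^{[j]}$, the two nonzero cases $j = i$ and $j = i+1$ identify both sides canonically with $DA(x, y)$ and $A(y, x)$ respectively (using $\hat{A}(x^{[i]}, y^{[i]}) = A(x, y)$, $\hat{A}(y^{[i]}, x^{[i+1]}) = DA(x, y)$ and the analogous pair for $j = i+1$), and the compositions (i)--(iv) built into $\hat{A}$ are precisely what makes these identifications natural in both arguments.

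Next, for an arbitrary $\ph \in \Aut(\hat{A})$, the functor $\ph$ induces isomorphisms on all morphism spaces compatible with composition; applying $\ph$ to source and target of $\eta_{x, y}$ and comparing with $\eta_{\ph x, \ph y}$ gives the natural-in-$y$ chain
\[
\hat{A}(\ph y, \nu_A(\ph x)) \iso D\hat{A}(\ph x, \ph y) \iso D\hat{A}(x, y) \iso \hat{A}(y, \nu_A(x)) \iso \hat{A}(\ph y, \ph \nu_A(x)).
\]
Since $\ph$ is a bijection on $\hat{A}_0$, the variable $\ph y$ also ranges over all of $\hat{A}_0$, so we obtain an isomorphism of representable functors $\hat{A}(-, \nu_A \ph(x)) \iso \hat{A}(-, \ph \nu_A(x))$. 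By Yoneda's lemma, $\nu_A \ph(x) \iso \ph \nu_A(x)$; by local boundedness of $\hat{A}$ this isomorphism is the equality $\nu_A \ph(x) = \ph \nu_A(x)$ on objects, as required.

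The only genuinely computational step is the naturality of $\eta$ in paragraph two, in particular the compatibility of the identifications in the two cases $j = i$ and $j = i+1$ with the bimodule multiplications (ii), (iii) of Definition \ref{dfn:repet-cat}(1). This I expect to be the main technical obstacle, but it is a direct unwinding of how the $A$-bimodule structure on $DA$ is built into the composition in $\hat{A}$, not a conceptual difficulty.
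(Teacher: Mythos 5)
Your argument is correct. Note that the paper itself gives no proof of this lemma --- it is cited verbatim from \cite[Lemma 1.5]{AK} --- so there is no in-text argument to compare against; your proposal supplies a self-contained proof where the paper only supplies a reference. The route you take (exhibit the Serre-duality isomorphism $\eta_{x,y}\colon D\hat{A}(x,y)\isoto\hat{A}(y,\nu_A(x))$, transport it along an arbitrary $\ph\in\Aut(\hat{A})$, conclude $\nu_A\ph(x)\iso\ph\nu_A(x)$ by Yoneda, and upgrade to equality because distinct objects of the locally bounded category $\hat{A}$ are non-isomorphic) is the standard conceptual proof; it is equivalent to observing that automorphisms permute the indecomposable projectives of the self-injective category $\hat{A}$ and hence their socles, which is how this fact is usually justified in the literature (e.g.\ in \cite{OTY}). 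Your case check that both sides of $\eta_{x^{[i]},y^{[j]}}$ reduce to $D(A(x,y))$ when $j=i$ and to $A(y,x)$ when $j=i+1$ (and vanish otherwise) is accurate, and the one step you defer --- naturality of $\eta$ in both variables, i.e.\ that the compositions (ii) and (iii) of Definition \ref{dfn:repet-cat} realize the bimodule structure of $D\hat{A}$ under these identifications --- is exactly the classical statement that $\hat{A}$ is self-injective with Nakayama automorphism $\nu_A$; it is a routine verification and the only place where actual computation is needed, as you correctly flag. Two minor remarks: for the Yoneda step you only need naturality of $\eta_{x,-}$ in the second variable for each fixed $x$, so you could trim the claim of binaturality; and in the displayed chain the middle isomorphism $D\hat{A}(\ph x,\ph y)\iso D\hat{A}(x,y)$ should be read as a natural isomorphism of functors in $\ph y$ obtained by restricting along the bijection $\ph$ on objects and morphisms, which works precisely because $\ph$ is an automorphism rather than a mere functor.
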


\begin{prp}\label{jump}
Let $A$ be an algebra, $n$ an integer,
and $\ph$ an automorphism of $\hat{A}$.
Then the following are equivalent:
\begin{enumerate}
\item
$\ph$ is an automorphism with jump $n$;
\item
$\ph(A^{[i]}) = A^{[i+n]}$ for some integer $i$;
\item
$\ph(A^{[j]}) = A^{[j+n]}$ for all integers $j$; and
\item
$\ph = \ph _L\nu_A^n$ for some automorphism $\ph _L$ of $\hat{A}$ with jump $0$.
\item
$\ph = \nu_A^n\ph _R$ for some automorphism $\ph _R$ of $\hat{A}$ with jump $0$.
\end{enumerate}
\end{prp}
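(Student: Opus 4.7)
The plan is to prove the circular chain $(3) \Rightarrow (1) \Rightarrow (2) \Rightarrow (3)$, and then deduce $(3) \Leftrightarrow (4) \Leftrightarrow (5)$ by producing and inverting explicit factorizations through $\nu_A^n$.

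The first two implications are immediate: $(3) \Rightarrow (1)$ is the special case $j = 0$, and $(1) \Rightarrow (2)$ is the special case $i = 0$. The one genuine step is $(2) \Rightarrow (3)$, and this is exactly where Lemma \ref{lem:com-Nak} enters. Indeed, suppose $\ph(A^{[i]}) = A^{[i+n]}$ for a particular $i$. For any object $x^{[j]} \in \hat{A}_0$ write $j = i + k$ with $k \in \bbZ$; then $x^{[j]} = \nu_A^k(x^{[i]})$, and by iterating Lemma \ref{lem:com-Nak} we have $\ph(\nu_A^k(x^{[i]})) = \nu_A^k(\ph(x^{[i]}))$ as objects of $\hat{A}$. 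Since $\ph(x^{[i]}) \in A^{[i+n]}$, applying $\nu_A^k$ lands in $A^{[i+n+k]} = A^{[j+n]}$. Hence $\ph(A^{[j]}) \subseteq A^{[j+n]}$, and the reverse inclusion follows by applying the same argument to $\ph^{-1}$, which has jump $-n$ at the index $i + n$.

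For $(3) \Rightarrow (4)$ and $(3) \Rightarrow (5)$, I define $\ph_L := \ph \nu_A^{-n}$ and $\ph_R := \nu_A^{-n}\ph$ respectively. Then $\ph_L(A^{[j]}) = \ph(A^{[j-n]}) = A^{[j]}$ and $\ph_R(A^{[j]}) = \nu_A^{-n}(A^{[j+n]}) = A^{[j]}$, so both $\ph_L$ and $\ph_R$ are automorphisms of $\hat{A}$ with jump $0$, and clearly $\ph = \ph_L \nu_A^n = \nu_A^n \ph_R$. Conversely, for $(4) \Rightarrow (3)$ and $(5) \Rightarrow (3)$, I simply apply each side to $A^{[j]}$: since $\ph_L$ (respectively $\ph_R$) fixes every $A^{[j]}$ setwise, composition with $\nu_A^n$ on either side shifts the index by $n$.

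The main obstacle is really only the step $(2) \Rightarrow (3)$; once one commits to thinking of the $A^{[j]}$ as $\nu_A$-orbits of $A^{[i]}$ and invokes the object-level commutation from Lemma \ref{lem:com-Nak}, everything else is formal. A minor point of care is that Lemma \ref{lem:com-Nak} only guarantees agreement on \emph{objects}, not on morphisms, but this is precisely what the jump condition requires, so nothing more is needed.
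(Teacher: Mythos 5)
Your proof is correct, and since the paper only cites this proposition from \cite{AK} without reproducing an argument, there is no in-text proof to compare against; your route --- using the object-level commutation of Lemma \ref{lem:com-Nak} to promote the single-index hypothesis (2) to all indices, then factoring through $\nu_A^{\pm n}$ --- is precisely the argument the cited lemma is designed for. Two small points you pass over silently, both immediate: iterating Lemma \ref{lem:com-Nak} for negative $k$ requires noting that $\ph\nu_A=\nu_A\ph$ on objects also yields $\ph\nu_A^{-1}=\nu_A^{-1}\ph$ on objects, and in $(4)\Rightarrow(3)$, $(5)\Rightarrow(3)$ the claim that $\ph_L$, $\ph_R$ fix \emph{every} $A^{[j]}$ setwise uses the already-established implication $(1)\Rightarrow(3)$ with $n=0$ rather than the definition of jump $0$ alone.
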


By this Proposition, we have enough to consider automorphisms with jump $0$.
We cite the following from \cite[Lemma 2.3]{Asa99}.

\begin{lem}\label{repet-auto}
Let $\ps \colon A \to B$ be an isomorphism of locally bounded categories.
Denote by $\ps^y_x \colon A(y, x) \to B(\ps y, \ps x)$
the isomorphism defined by $\ps$ for all $x, y \in A$.
Define $\hat{\ps} \colon \hat{A} \to \hat{B}$ as follows.
\begin{itemize}
\item For each $x^{[i]} \in \hat{A}, \hat{\ps}(x^{[i]}):= (\ps x)^{[i]}$;
\item For each $f^{[i]} \in \hat{A}(x^{[i]}, y^{[i]}), \hat{\ps}(f^{[i]}):= (\ps f)^{[i]}$; and
\item For each $\be^{[i]} \in \hat{A}(x^{[i]}, y^{[i+1]}),
\hat{\ps}(\be^{[i]}):= (D((\ps^y_x)\inv)(\be))^{[i]} = (\be \circ (\ps^y_x)\inv)^{[i]}$.
\end{itemize}
Then
\begin{enumerate}
\item[(1)]
$\hat{\ps}$ is an isomorphism.
\item[(2)]
Given an isomorphism $\rho \colon \hat{A} \to \hat{B}$,
the following are equivalent.
	\begin{enumerate}
	\item[(a)]
	$\rho = \hat{\ps}$;
	\item[(b)]
	$\rho$ satisfies the following.
		\begin{enumerate}
		\item[(i)]
		$\rho \nu_A = \nu_B \rho$;
		\item[(ii)]
		$\rho(A^{[0]}) = A^{[0]}$;
		\item[(iii)] The diagram
		$$
		\begin{CD}
		A @>{\ps}>> B\\
		@V{\id^{[0]}}VV @VV{\id^{[0]}}V\\
		A^{[0]} @>>{\rho}> B^{[0]}
		\end{CD}
		$$
		is commutative; and
		\item[(iv)]
		$\rho(\be^{[0]}) = (\be \circ (\ps^y_x)\inv)^{[0]}$ for
		all $x, y \in A$ and all $\be \in DA(y, x)$.
		\end{enumerate}
	\end{enumerate}
\end{enumerate}
\end{lem}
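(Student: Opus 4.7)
The plan is to establish (1) by direct verification that $\hat{\ps}$ is a functor admitting $\widehat{\ps\inv}$ as its inverse, and then to prove (2) by reading (a)~$\Rightarrow$~(b) straight off the defining formulas and proving (b)~$\Rightarrow$~(a) by uniqueness.

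For (1), I would first check that $\hat{\ps}$ preserves identities (immediate, since $\ps$ does and the superscript $i$ is unchanged) and then check composition. Composition splits into the four cases of Definition~\ref{dfn:repet-cat}(1); case (i) is functoriality of $\ps$, and the truly non-trivial cases are the mixed ones (ii) and (iii), involving one morphism in some $A^{[i]}$ and one in $\hat{A}(x^{[i]}, y^{[i+1]}) = DA(y, x)$. For case (ii), given $f \in A(x, y)$ and $\be \in DA(z, y)$, unwinding the desired identity $\hat{\ps}(\be^{[i]} \circ f^{[i]}) = \hat{\ps}(\be^{[i]}) \circ \hat{\ps}(f^{[i]})$ by evaluating both sides on a test element $h' \in B(\ps z, \ps x)$ reduces it to $\ps^z_y\bigl(f \circ (\ps^z_x)\inv(h')\bigr) = \ps f \circ h'$, which is nothing but the functoriality of $\ps$ together with $\ps^z_x \circ (\ps^z_x)\inv = \id$. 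Case (iii) is symmetric. Once $\hat{\ps}$ is shown to be a functor, the analogous formulas produce a candidate $\widehat{\ps\inv}$; the composite on the $DA$-part amounts to successive pre-composition by $(\ps^y_x)\inv$ and by $\ps^y_x$, which cancels, so $\hat{\ps}$ is invertible.

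For (2), the implication (a)~$\Rightarrow$~(b) is immediate from the construction of $\hat{\ps}$: clauses (iii) and (iv) are literally the definitional formulas of $\hat{\ps}$ at level $0$, clause (ii) is built in, and (i) holds because both $\hat{\ps}\,\nu_A$ and $\nu_B\,\hat{\ps}$ raise the superscript $i$ by one while acting on the underlying datum via $\ps$. For the converse (b)~$\Rightarrow$~(a), I would argue uniqueness. Conditions (ii) and (iii) force $\rho$ to coincide with $\hat{\ps}$ on the full subcategory $A^{[0]}$; condition (iv) forces $\rho$ to coincide with $\hat{\ps}$ on every hom-space $\hat{A}(x^{[0]}, y^{[1]})$; and condition (i) propagates these determinations along $\nu_A$, since every object of $\hat{A}$ has the form $\nu_A^i(x^{[0]}) = x^{[i]}$ and every morphism of $\hat{A}$ is either $\nu_A^i(f^{[0]})$ or $\nu_A^i(\be^{[0]})$ for some $i \in \bbZ$. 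Matching these forced values with the explicit ones from part (1) yields $\rho = \hat{\ps}$.

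The main obstacle will be the verification of the mixed compositions (ii) and (iii) in part (1), where the dualization has to be handled with care and one must check that the formula $\hat{\ps}(\be^{[i]}) = (\be \circ (\ps^y_x)\inv)^{[i]}$ is simultaneously compatible with the left and right $A$-actions on $DA$. As sketched above, however, evaluating each putative identity on a test element of the appropriate $B$-Hom space reduces it cleanly to the functoriality of $\ps$, so this obstacle is essentially notational rather than substantive.
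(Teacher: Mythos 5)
Your proposal is correct. Note that the paper itself gives no proof of this lemma --- it is cited verbatim from \cite[Lemma 2.3]{Asa99} --- so there is nothing internal to compare against; your argument (direct verification of functoriality with the mixed compositions reduced, via evaluation on a test element of the appropriate $B$-Hom space, to functoriality of $\ps$, invertibility via $\widehat{\ps\inv}$, and uniqueness for (b)~$\Rightarrow$~(a) using the fact that $\nu_A$-translates of the level-$0$ data exhaust $\hat{A}$) is the standard and complete way to establish it.
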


\section{Automorphisms of the repetitive category with jump $0$}

Throughout this section $A$ is an algebra.
We set $\autz$ to be the group of all automorphisms of $\hat{A}$ with jump $0$.
Note that an algebra $A$ is regarded as an $A$-$A$-bimodule
$A(\blank, \blank) \colon A\op \times A  \to \Mod \k$ that sends
$(x, y) \in (A\op \times A)_0$ to $A(x, y)$.

\begin{dfn}
For each $\ph, \ps \in \Aut(A)$
we denote by ${}_\ps A_\ph$ the $A$-$A$-bimodule $A(\ph(\blank), \ps(\blank))$ defined by $(x, y) \mapsto A(\ph(x), \ps(y))$ for all $x, y \in A_0 \cup A_1$.
Then a morphism $f \colon A \to {}_\ps A_\ph$ of $A$-$A$-bimodules
is nothing but a family  $f = (f_{y,x})_{y,x}$ of linear maps $f_{y,x}\colon A(x,y) \to A(\ph(x), \ps(y))$ satisfying
\begin{equation}\label{eq:semilinear}
f_{y',x'}(a\al b)=\ps(a)f_{y,x}(\al)\ph(b)  \
\text{for each $y' \ay{a}y \ay{\al} x \ay{b} x'$ in $A$}.
\end{equation}
In the sequel we usually omit subscripts $x, y, x', y'$ above.
\end{dfn}

\begin{lem}Let $A$ be an algebra.
Then we can construct a bijection $\ze$ from $\autz$ to the set $\calA$ of families of pairs
$(\ph _i, \bar{\ph}_i)_{i\in \bbZ}$,
where $\ph_i$ is an automorphism of $A$
and $\bar{\ph}_i \colon A \to
{}_{\ph_i}A_{\ph_{i+1}}$ is an isomorphism of $A$-$A$-bimodules
for all $i\in \bbZ$.
\end{lem}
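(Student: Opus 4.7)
The plan is to define $\ze(\ph) = (\ph_i, \bar{\ph}_i)_{i \in \bbZ}$ by restricting a jump-$0$ automorphism $\ph$ of $\hat{A}$ to each level and to the between-level hom-spaces. Since $\ph(A^{[i]}) = A^{[i]}$ by Proposition \ref{jump}, I set $\ph_i := (\id^{[i]})^{-1} \circ (\ph|_{A^{[i]}}) \circ \id^{[i]} \in \Aut(A)$. The restriction of $\ph$ to $\hat{A}(x^{[i]}, y^{[i+1]}) \cong DA(y, x)$ is a $\k$-linear isomorphism onto $\hat{A}((\ph_i x)^{[i]}, (\ph_{i+1} y)^{[i+1]}) \cong DA(\ph_{i+1} y, \ph_i x)$. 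Passing to $\k$-duals and taking inverses, this datum is equivalent to giving a $\k$-linear isomorphism $\bar{\ph}_i \colon A \to {}_{\ph_i} A_{\ph_{i+1}}$ whose $(y,x)$-component is the induced map $A(y, x) \to A(\ph_{i+1} y, \ph_i x)$. Under this duality, the composition rules (ii) and (iii) of Definition \ref{dfn:repet-cat} — the right and left $A$-actions on $DA$ — translate into precisely the bimodule compatibility \eqref{eq:semilinear} for $\bar{\ph}_i$ with $\ps = \ph_i$ and $\ph = \ph_{i+1}$, so $\bar{\ph}_i$ is an isomorphism of $A$-$A$-bimodules. Injectivity of $\ze$ is then immediate, since an automorphism of $\hat{A}$ is determined by its values on the three nonzero hom-types of $\hat{A}$, which $(\ph_i, \bar{\ph}_i)_i$ records completely.

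For surjectivity, given a family $(\ph_i, \bar{\ph}_i)_{i \in \bbZ}$ in $\calA$, I would construct $\ph \colon \hat{A} \to \hat{A}$ by $\ph(x^{[i]}) := (\ph_i x)^{[i]}$, $\ph(f^{[i]}) := (\ph_i f)^{[i]}$ for $f \in A(x, y)$, and $\ph(\be^{[i]}) := (\be \circ \bar{\ph}_i^{-1})^{[i]}$ for $\be^{[i]} \in \hat{A}(x^{[i]}, y^{[i+1]})$ — an analogue of the formula in Lemma \ref{repet-auto}(2)(b)(iv). Bijectivity of each $\ph_i$ and each $\bar{\ph}_i$ makes $\ph$ a bijection on objects and on every hom-space.

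The main — though ultimately routine — obstacle is verifying that this $\ph$ is a functor, i.e.\ respects each of the four composition types of Definition \ref{dfn:repet-cat}. Case (i) is just the functoriality of $\ph_i$, and case (iv) is vacuous since both sides vanish. Cases (ii) and (iii) dualize back into the right- and left-$A$-linearity of $\bar{\ph}_i$, i.e.\ precisely condition \eqref{eq:semilinear}, so they hold by hypothesis on the family. Once functoriality is confirmed, $\ph \in \autz$ and $\ze(\ph)$ manifestly returns the given family, so $\ze$ is the desired bijection.
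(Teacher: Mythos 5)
Your proposal is correct and follows essentially the same route as the paper's proof: you define $\ph_i$ by restricting $\ph$ to $A^{[i]}$ via $\id^{[i]}$, obtain $\bph_i$ from the action of $\ph$ on the between-level hom-spaces through duality (the paper's diagram \eqref{eq:dfn-phi-2}), deduce the bimodule condition \eqref{eq:semilinear} from functoriality of $\ph$, and invert the construction exactly as in Lemma \ref{repet-auto}. No gaps worth noting.
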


\begin{proof}
Let $\ph \in \autz$.
For each $i\in \bbZ$ we define $(\ph_i, \bph_i) \in \calA$ as follows.
Since $\ph$ has jump 0, $\ph$ restricts to an automorphism of $A^{[i]}$, and
we set $\ph _i:=(\id^{[i]})^{-1}\ph \id^{[i]}:A\to A$, which turns out to be
an automorphism of $A$. 
For each $x, y \in A_0$ we define $\bph_i \colon A(y,x) \to A(\ph_{i+1}(y), \ph_i(x))$
by the following commutative diagram
\begin{equation}\label{eq:dfn-phi-2}
\vcenter{\xymatrix{
D(A(y,x)) & D(A(\ph_{i+1}(y), \ph_i(x)))\\
\hat{A}(x^{[i]}, y^{[i+1]}) &  \hat{A}(\ph(x^{[i]}), \ph(y^{[i+1]})).
\ar"1,1";"1,2"^(0.4){D(\bph_i\inv)}
\ar"2,1";"2,2"_(0.4){\ph}
\ar"1,1";"2,1"_{\et_i}
\ar"1,2";"2,2"^{\et_i}
}}
\end{equation}
Then $\bar{\ph}_i \colon A \to {}_{\ph_i}A_{\ph_{i+1}}$ is an isomorphism of $A$-$A$-bimodules.
Indeed, since $\ph \colon \hat{A} \to \hat{A}$ is a functor,
it induces a morphism of bimodules $\hat{A}(\blank, \blank) \to\hat{A}(\blank, \blank)$
giving us the commutativity of the inner rectangle in the following commutative diagram.
$$
\xymatrix{
D(A(y',x')) &&& D(A(\ph_{i+1}(y'), \ph_i(x')))\\
&\hat{A}(x'^{[i]}, y'^{[i+1]}) &  \hat{A}(\ph(x'^{[i]}), \ph(y'^{[i+1]}))&\\
&\hat{A}(x^{[i]}, y^{[i+1]}) &  \hat{A}(\ph(x^{[i]}), \ph(y^{[i+1]}))&\\
D(A(y,x)) &&& D(A(\ph_{i+1}(y), \ph_i(x)))
\ar"2,2";"2,3"^(0.4)\ph
\ar"3,2";"3,3"_(0.4)\ph
\ar"2,2";"3,2"_{\hat{A}(a^{[i]}, b^{[i+1]})}
\ar"2,3";"3,3"^{\hat{A}(\ph(a^{[i]}), \ph(b^{[i+1]}))}
\ar"1,1";"1,4"^{D(\bph_i\inv)}
\ar"4,1";"4,4"_{D(\bph_i\inv)}
\ar"1,1";"4,1"_{D(A(b, a))}
\ar"1,4";"4,4"^{D(A(\ph_{i+1}(b), \ph_i(a)))}
\ar"1,1";"2,2"^{\et_i}
\ar"1,4";"2,3"_{\et_i}
\ar"4,1";"3,2"_{\et_i}
\ar"4,4";"3,3"^{\et_i}
}
$$
Hence the commutativity of the outer rectangle shows the equality
\eqref{eq:semilinear}.

Conversely, let $(\ph_i, \bph_i) \in \calA$.
We construct a $\ph \in \autz$ as follows.
For each $x^{[i]}$ with $x \in A_0, i \in \bbZ$
we set $\ph(x^{[i]}) := \ph_i(x)^{[i]}$.
Define $\ph$ on morphisms as follows.
For each $i$ define $\phi\colon \hat{A}(x^{[i]}, y^{[i]}) \to \hat{A}(\ph_i(x)^{[i]}, \ph_i(y)^{[i]})$ by the following commutative diagram:

$$
\xymatrix{
\hat{A}(x^{[i]}, y^{[i]}) &\hat{A}(\ph_i(x)^{[i]}, \ph_i(y)^{[i]})\\
A(x, y) & A(\ph_i(x), \ph_i(y)),
\ar"1,1";"1,2"^(0.4)\phi
\ar"2,1";"2,2"_(0.4){\ph_i}
\ar"2,1";"1,1"^{\id^{[i]}}
\ar"2,2";"1,2"_{\id^{[i]}}
}
$$
and define  $\phi\colon \hat{A}(x^{[i]}, y^{[i+1]}) \to \hat{A}(\ph_i(x)^{[i]}, \ph_{i+1}(y)^{[i+1]})$ by
$\bar{\ph}_i$
with \eqref{eq:dfn-phi-2}.
Namely,
$\ph :\hat{A}\to \hat{A}$ by
$\ph(a^{[i]}):=(\ph _i(a))^{[i]}$ ($a \in A(x,y)$)
and $\ph (\be^{[i]}):=(D(\bph _i^{-1})(\be))^{[i]}$ ($\be \in D(A(y,x))$)
for all $x, y \in A_0$.
Then it is easy to see that $\ph \in \autz$ and that
these constructions are inverses to each other.
\end{proof}

\begin{rmk} \label{rmk:comp}
In the above we identify $\ph$ with $\ze(\ph)=(\ph _i,\bph_i)_{i\in \bbZ}$. Then\\
(1) for another $\ps =(\ps_i,\bar{\ps}_i)_{i\in \bbZ} \in \autz$
we have  $\ph \circ \ps =(\ph_i \circ\ps_i,\bph_i \circ \bar{\ps}_i)_{i\in \bbZ}$; and \\
(2) for each $\si \in \Aut(A)$ we have $\hat{\si} = (\si, \si )_{i\in \bbZ}$.

In (1) note that the composite $\bph_i \circ \bar{\ps}_i$ of the second  entries
 is a bimodule map
 $$
 A \to {}_{\ph_i\ps_i}A_{\ph_{i+1}\ps_{i+1}}$$
given by
$
A(x, y) \ya{\bps_i} A(\ps_{i+1}x, \ps_{i}y) \ya{\bph_i} A(\ph_{i+1}\ps_{i+1}x, \ph_i\ps_{i}y)
$
for all $x, y \in A_0$.
\end{rmk}

\begin{dfn}\label{dfn-Ps}
We define a group homomorphism $\Ps:\autz \to \Aut (A)$
by $\Ps(\ph):=\ph _0$ for all $\ph \in \autz$.
\end{dfn}
\begin{rmk} \label{epim}
For each $\si \in \Aut(A)$ we have $\hat{\si}\in \autz$ and $\Ps(\hat{\si})=\si$  
by Lemma \ref{repet-auto}.
Thus $\Ps$ is a retraction, in particular, an epimorphism.
\end{rmk}

It is obvious by definition that
an automorphism $\ph \in\autz$ is in $\Ker \Ps$
if and only if $\ph_0$ is the identity of $A$.
Therefore to have more information on $\Ker\Ps$,
we need to know how to construct automorphisms of $\hat{A}$
from the identity of $A$.

\begin{dfn}\label{dfn:xi}
We define a map $\xi:(\k^\times)^{A_0}\to \Aut (A)$ as follows.
For each $\la =(\la (x))_{x\in A_0}\in (\k^\times)^{A_0}$
we set
$$\xi (\la)(x):=x\quad \text{for all }x \in A_0$$
and
for each morphism $x \ya{a} y$ in $A$ we define $\xi (\la)(a)$ by the commutativity of the diagram
$$
\xymatrix@C=40pt{
x & x\\
y & y
\ar"1,1";"1,2"^{\la(x)\id_x}
\ar"2,1";"2,2"_{\la(y)\id_y}
\ar"1,1";"2,1"_{\xi(\la)(a)}
\ar"1,2";"2,2"^{a}
}
$$
in $A$, i.e., 
\begin{equation}\label{eq:def-xi}
\xi (\la)(a):=\la (y)^{-1}\la (x)a.
\end{equation}
Then it is easy to see that $\xi(\la) \in \Aut(A)$ for all $\la \in (\k^\times)^{A_0}$,
and that $\xi$ is a group homomorphism.

Note that we can regard $\la$ as a natural isomorphism
$\la\colon \xi(\la) \To \id_A$.
\end{dfn}

\begin{lem}\label{constr-auto}
Regard $(\k^\times)^{\hat{A}_0}=((\k^\times)^{A_0})^\bbZ$ by the canonical isomorphism
$(\k^\times)^{\hat{A}_0}=(\k^\times)^{A_0\times\bbZ}\iso ((\k^\times)^{A_0})^\bbZ$.
Then for each $\la=(\la _i)_{i\in \bbZ}\in (\k^\times)^{\hat{A}_0}$ 
we define a family $(\ph _i,\bph _i)_{i\in \bbZ}$ of maps as follows.
\begin{equation}\label{eq:phi}
\ph _i:=\begin{cases}
\xi (\la _0\la _1\cdots \la _{i-1}) & \text{if } i>0\\
\id _A & \text{if } i=0\\
            \xi (\la _i\la _{i+1}\cdots \la _{-1})\inv & \text{if } i<0
\end{cases} 
\end{equation}
and $\bph _i:A\to A$ is defined by 
\begin{equation}\label{eq:phibar}
\bph _i(a):=\la _i(s(a))\ph _i(a)\ (=\la _i(t(a))\ph _{i+1}(a))\text{ for $a\in A_1$.}
\end{equation}
Then for each $i \in \bbZ$ we have
\begin{enumerate}
\item
$\ph_i$ is an automorphism of $A$; and\\[-10pt]
\item 
$\bph_i\colon A \to{}_{\ph_i}A_{\ph_{i+1}}$ is an isomorphism of $A$-$A$-bimodule.
\end{enumerate}
\end{lem}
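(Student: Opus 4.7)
The plan is to dispatch (1) immediately from the fact that $\xi\colon (\k^\times)^{A_0} \to \Aut(A)$ is a group homomorphism (Definition \ref{dfn:xi}), so that each $\ph_i$ in \eqref{eq:phi} is either a value of $\xi$, the identity, or the inverse of a value of $\xi$, hence a member of $\Aut(A)$. A useful consequence, recorded for use throughout (2), is that each $\ph_i$ acts as the identity on $A_0$.

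For (2), I would reduce everything to the single identity
\begin{equation*}
\ph_{i+1} = \xi(\la_i)\,\ph_i \qquad (i \in \bbZ),
\end{equation*}
which I would verify by the case distinction $i>0$, $i=0$, $i<0$, using $\xi(\la\mu) = \xi(\la)\xi(\mu)$ together with the telescoping of the products $\la_0 \cdots \la_{i-1}$ (respectively $\la_i \cdots \la_{-1}$). Applying this identity to a morphism $a\colon x \to y$ and substituting \eqref{eq:def-xi} yields
\begin{equation*}
\la_i(y)\,\ph_{i+1}(a) = \la_i(x)\,\ph_i(a),
\end{equation*}
which is exactly the compatibility of the two expressions for $\bph_i(a)$ claimed parenthetically in \eqref{eq:phibar}, so $\bph_i$ is well defined. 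Bijectivity of $\bph_i\colon A(x, y) \to A(x, y) = A(\ph_{i+1}(x), \ph_i(y))$ is then immediate, being the composite of the linear automorphism $\ph_i|_{A(x, y)}$ with multiplication by the unit $\la_i(x) \in \k^\times$.

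For the bimodule condition \eqref{eq:semilinear}, I would take a chain $y' \ay{a} y \ay{\al} x \ay{b} x'$ in $A$ and, using that $\ph_i$ is a functor and that scalars commute with morphisms, expand
\begin{equation*}
\bph_i(a\al b) = \la_i(x')\,\ph_i(a)\ph_i(\al)\ph_i(b), \qquad \ph_i(a)\,\bph_i(\al)\,\ph_{i+1}(b) = \la_i(x)\,\ph_i(a)\ph_i(\al)\ph_{i+1}(b);
\end{equation*}
these match precisely by the well-definedness equation applied to the rightmost factor $b$, namely $\la_i(x')\ph_i(b) = \la_i(x)\ph_{i+1}(b)$.

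No step is genuinely hard; the only real care is the index bookkeeping needed to establish $\ph_{i+1} = \xi(\la_i)\ph_i$ uniformly across the three defining ranges of \eqref{eq:phi}. Conceptually this identity expresses that $\la_i$ is the natural isomorphism $\ph_{i+1} \To \ph_i$, in line with the remark at the end of Definition \ref{dfn:xi}; once it is in hand, both the well-definedness and the bimodule axiom drop out as transparent consequences.
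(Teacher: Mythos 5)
Your proposal is correct and follows essentially the same route as the paper: your key identity $\ph_{i+1}=\xi(\la_i)\ph_i$ is just a compact restatement of the telescoping commutative diagram the paper uses to justify the parenthetical equality $\la_i(s(a))\ph_i(a)=\la_i(t(a))\ph_{i+1}(a)$, and your verification of bijectivity and of the bimodule condition \eqref{eq:semilinear} matches the paper's chain of equalities. No gaps.
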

\begin{proof}
 (1) This is obvious because in \eqref{eq:phi} $\ph_i$ is  given by the left multiplication by a nonzero element of $\k$.  
 
 (2) In \eqref{eq:phibar} the equality $\la _i(s(a))\ph _i(a)=\la _i(t(a))\ph _{i+1}(a)$ 
follows from the diagram
$$
\xymatrix@C=36pt{
x&x&\cdots&x& x & x & \cdots & x &x\\
y&y&\cdots&y&y & y & \cdots & y&y,
\ar"1,1";"1,2"^{\la_i(x)\id_x}
\ar"2,1";"2,2"_{\la_i(y)\id_y}
\ar"1,1";"2,1"_{\ph_{i+1}(a)}
\ar"1,2";"2,2"^{\ph_i(a)}
\ar"1,2";"1,3"^{\la_{i-1}(x)\id_x}
\ar"2,2";"2,3"_{\la_{i-1}(y)\id_y}
\ar"1,3";"1,4"^{\la_1(x)\id_x}
\ar"2,3";"2,4"_{\la_1(y)\id_y}
\ar"1,4";"2,4"_{\ph_1(a)}
\ar"1,4";"1,5"^{\la_0(x)\id_x}
\ar"2,4";"2,5"_{\la_0(y)\id_y}
\ar"1,5";"2,5"_{a=\ph_0(a)}
\ar"1,6";"1,5"_{\la_{-1}(x)\inv\id_x}
\ar"2,6";"2,5"^{\la_{-1}(y)\inv\id_y}
\ar"1,6";"2,6"_{\ph_{-1}(a)}
\ar"1,7";"1,6"_{\la_{-2}(x)\inv\id_x}
\ar"2,7";"2,6"^{\la_{-2}(y)\inv\id_y}
\ar"1,8";"1,7"_{\la_{j+1}(x)\inv\id_x}
\ar"2,8";"2,7"^{\la_{j+1}(y)\inv\id_y}
\ar"1,8";"2,8"^{\ph_{j+1}(a)}
\ar"1,9";"1,8"_{\la_j(x)\inv\id_x}
\ar"2,9";"2,8"^{\la_j(y)\inv\id_y}
\ar"1,9";"2,9"^{\ph_j(a)}
}
$$
the commutativity of which follows from \eqref{eq:phi}.
[In fact conversely the condition \eqref{eq:phi} is determined so that this diagram commutes.]

We show $\bar{\ph}_i \colon A \to {}_{\ph_i}A_{\ph_{i+1}}$ is an isomorphism of $A$-$A$-bimodules.
Indeed, by definition $\bar{\ph}_i$ is a bijection and for each $\al ,\be ,\ga \in A_1$ we have 
\begin{eqnarray*}
\bph _i(\al \be \ga )&=&\la _i(s(\ga ))\ph _i(\al \be \ga )\\
         &=&\ph _i(\al )\la _i(s(\ga ))\ph _i(\be \ga )\\
         &=&\ph _i(\al )\bph _i(\be \ga )\\
         &=&\ph _i(\al )\la _i(t(\be ))\ph _{i+1}(\be \ga )\\
         &=&\ph _i(\al )\la _i(t(\be ))\ph _{i+1}(\be )\ph _{i+1}(\ga )\\
         &=&\ph _i(\al )\bph _i(\be )\ph _{i+1}(\ga ).
\end{eqnarray*}
\end{proof}

\begin{cor}
By Lemma \ref{constr-auto} define a map
$\Ph\colon (\k^\times)^{\hat{A}_0} \to \autz$
by $\Ph(\la):= (\ph_i,\bph_i)_{i\in \bbZ}$. Then $\Phi$ is a group homomorphism. 
\end{cor}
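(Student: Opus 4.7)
The plan is to verify that $\Phi$ respects the group operations, i.e., $\Phi(\la\mu)=\Phi(\la)\circ\Phi(\mu)$ for all $\la,\mu\in(\k^\times)^{\hat A_0}$, where multiplication on the domain is pointwise. By Remark \ref{rmk:comp}(1), composition in $\autz$ acts coordinate-wise on the pairs $(\ph_i,\bph_i)_{i\in\bbZ}$, so it suffices to establish, for each $i\in\bbZ$, the two equalities $\ph_i^{\la\mu}=\ph_i^\la\circ\ph_i^\mu$ and $\bph_i^{\la\mu}=\bph_i^\la\circ\bph_i^\mu$, where a superscript $\la$ indicates the element of $(\k^\times)^{\hat A_0}$ from which the pair was constructed via Lemma \ref{constr-auto}.

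For the first-coordinate identity, I would invoke the observation in Definition \ref{dfn:xi} that $\xi$ is a group homomorphism $(\k^\times)^{A_0}\to\Aut(A)$, together with the abelianness of the source under pointwise multiplication. Applying $\xi$ to the identity $(\la\mu)_0\cdots(\la\mu)_{i-1}=(\la_0\cdots\la_{i-1})(\mu_0\cdots\mu_{i-1})$ then yields the required equality in case $i>0$; the case $i=0$ is trivial, and the case $i<0$ is obtained by the same computation after inverting.

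For the second-coordinate identity, the decisive observation is that every $\ph_i^\la=\xi(\cdots)$ fixes the objects of $A$, so $\ph_{i+1}^\la(x)=x$ and $\ph_i^\la(y)=y$ for all $x,y\in A_0$. Consequently the bimodule twist in ${}_{\ph_i^\la}A_{\ph_{i+1}^\la}$ leaves the underlying vector spaces $A(x,y)$ unchanged, and the composition recipe of Remark \ref{rmk:comp}(1) collapses to ordinary composition of $\k$-linear endomorphisms of $A$. Then for a morphism $a\in A(x,y)$ one computes
\[
(\bph_i^\la\circ\bph_i^\mu)(a)=\bph_i^\la(\mu_i(x)\,\ph_i^\mu(a))=\la_i(x)\mu_i(x)\,\ph_i^\la(\ph_i^\mu(a))=(\la\mu)_i(x)\,\ph_i^{\la\mu}(a)=\bph_i^{\la\mu}(a),
\]
using $\k$-linearity of $\bph_i^\la$ and the first-coordinate identity just established.

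I do not anticipate a serious obstacle. The only subtlety is the bimodule-twist bookkeeping in Remark \ref{rmk:comp}(1), which one must read carefully; however, this trivializes at once because each $\ph_i^\la$ fixes objects and acts on every hom-space by a scalar, so the twists cost nothing and every prospective scalar factor slides past every functor application.
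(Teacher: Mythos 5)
Your proposal is correct and follows essentially the same route as the paper: reduce to coordinate-wise identities via Remark \ref{rmk:comp}(1), get the first entries from $\xi$ being a group homomorphism, and verify the second entries by the scalar computation $\bar{\chi}_i(a)=(\la\mu)_i(x)\chi_i(a)=\la_i(x)\mu_i(x)\,\ph_i(\ps_i(a))=(\bph_i\circ\bar{\ps}_i)(a)$. Your extra remark that each $\ph_i$ fixes objects, so the bimodule twists do not affect the hom-spaces, is just a more explicit version of the bookkeeping the paper leaves implicit.
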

\begin{proof}
Take another $\mu \in (\k^\times)^{\hat{A}_0}$ with $\Ph(\mu):= (\ps_i,\bar{\ps}_i)_{i\in \bbZ}$. 
Set $\Ph (\la \circ \mu) = (\chi_i,\bar{\chi}_i)_{i\in \bbZ}$.
By Remark \ref{rmk:comp}, 
we have $\Ph(\la)\circ\Ph(\mu)=(\ph_i\circ\ps_i,\bph_i\circ\bar{\ps}_i)_{i\in \bbZ}$.
Therefore, to verify that  $\Ph(\la\mu)=\Ph(\la)\circ\Ph(\mu)$
it is enough to show that 
$$
\left\{
\begin{aligned}
\chi_i &=&\ph_i \circ \ps_i \\
\bar{\chi}_i &=&\bph_i \circ \bar{\ps}_i . \\
\end{aligned}
\right.
$$
By the definition of the multiplication in $(\k^\times)^{\hat{A}_0}$
the first equality is obvious because $\xi$ is a group homomorphism.
To show the second equality let $a\colon x \to y$ be in $A_1$.
Then 
$$
\bar{\chi}_i (a) = (\la \mu)_i (x)  \chi_i (a) 
= \la_i (x) \mu_i (x) \ph_i (a) \ps_i (a) 
=\bph_i (a) \bar{\ps}_i (a)
=(\bph_i  \circ \bar{\ps}_i ) (a),
$$
as required. 
\end{proof}

We assume the following property
which is necessary for our purpose.

\begin{dfn}\label{dfn-nocyc}
A locally bounded category $R$ is said to {\em have no nonzero oriented cycles}
if $R(x, x) \iso \k$ for all objects $x$ of $R$.
\end{dfn}

\begin{rmk}\label{rmk:rpt-noc}
(1) Note that if a locally bounded category $R$ is presented as $R=\k Q/I$ with  a quiver $Q$ and  an admissible ideal $I$ of $\k Q$, 
then $R$ has no nonzero oriented cycles if and only if  $I$ contains all oriented cycles in $Q$.

(2) If a locally bounded category $A$ has no nonzero oriented cycles, then
so does the repetitive category $\hat{A}$ because for any object $x^{[i]}$ of $\hat{A}$
we have $\hat{A}(x^{[i]}, x^{[i]})\iso A(x, x)\iso \k$.
\end{rmk}

\begin{prp}\label{rigid-exact}
Assume that $A$ has no nonzero oriented cycles.
Then there is an exact sequence of groups
$$1\to (\k^\times)^{\hat{A}_0} \xrightarrow{\Ph} \autz \xrightarrow{\Ps} \Aut (A) \to 1.$$
\end{prp}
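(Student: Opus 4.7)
The plan is to verify exactness at each of the three terms, namely: $\Ph$ is injective, $\Ps$ is surjective, and $\Im\Ph=\Ker\Ps$. Surjectivity of $\Ps$ is immediate from Remark \ref{epim}, which provides the section $\si\mapsto\hat\si$. For injectivity of $\Ph$, if $\Ph(\la)=\id_{\hat A}$ then in particular the corresponding bimodule maps $\bph_i$ all equal $\id_A$; since the construction in Lemma \ref{constr-auto} yields $\bph_i(\id_x)=\la_i(x)\id_x$, evaluating at $\id_x$ gives $\la_i(x)=1$ for every $i\in\bbZ$ and $x\in A_0$.

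The inclusion $\Im\Ph\subseteq\Ker\Ps$ is transparent from \eqref{eq:phi}, which forces $\ph_0=\id_A$. The substantive content is the reverse inclusion. Fix $\ph=(\ph_i,\bph_i)_{i\in\bbZ}\in\autz$ with $\ph_0=\id_A$; the goal is to exhibit $\la\in(\k^\times)^{\hat A_0}$ with $\Ph(\la)=\ph$. The crucial first step is to show that every $\ph_i$ fixes each object of $A$. For this, regard the bimodule isomorphism $\bph_i\colon A\to{}_{\ph_i}A_{\ph_{i+1}}$ as a left-$A$-module isomorphism. The right-idempotent decomposition $A=\bigoplus_{x\in A_0}Ax$ transports under $\bph_i$ to ${}_{\ph_i}A_{\ph_{i+1}}\cdot x=A\ph_{i+1}(x)$, whose left $A$-action is twisted by $\ph_i$. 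Untwisting by the left-$A$-module isomorphism $m\mapsto\ph_i^{-1}(m)$ identifies this with the ordinary projective $A\ph_i^{-1}\ph_{i+1}(x)$; hence $Ax\cong A\ph_i^{-1}\ph_{i+1}(x)$ as left $A$-modules. By basic-ness of $A$, this forces $\ph_i(x)=\ph_{i+1}(x)$, and induction on $|i|$ starting at $\ph_0=\id$ gives $\ph_i(x)=x$ for all $i\in\bbZ$ and $x\in A_0$.

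Now the no-cycles hypothesis enters: since $\ph_i,\ph_{i+1}$ fix objects, $\bph_i(\id_x)$ lies in $A(\ph_{i+1}(x),\ph_i(x))=A(x,x)=\k\,\id_x$, so there exists a unique $\nu_i(x)\in\k^\times$ with $\bph_i(\id_x)=\nu_i(x)\id_x$. Applying the bimodule rule to the two decompositions $\al=\id_y\cdot\al$ and $\al=\al\cdot\id_x$ of an arrow $\al\colon x\to y$ yields $\nu_i(y)\ph_{i+1}(\al)=\bph_i(\al)=\nu_i(x)\ph_i(\al)$. This gives both $\ph_{i+1}=\xi(\nu_i)\circ\ph_i$ via \eqref{eq:def-xi} and $\bph_i(\al)=\nu_i(t(\al))\ph_{i+1}(\al)$. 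Setting $\la_i:=\nu_i$ and unwinding the recursion from $\ph_0=\id_A$ reproduces exactly the formulas \eqref{eq:phi} and \eqref{eq:phibar}, so $\ph=\Ph(\la)$.

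The principal obstacle is the object-fixing step: this is the sole place where the bimodule, rather than merely $\k$-linear, structure of $\bph_i$ is needed, and it is also where basic-ness plays its essential role. Once it is in place, the no-cycles condition reduces each $\bph_i(\id_x)$ to a scalar per object, and reading off $\la$ becomes routine.
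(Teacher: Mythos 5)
Your proof is correct and follows essentially the same route as the paper's: surjectivity of $\Ps$ via the section $\si\mapsto\hat{\si}$, and the key inclusion $\Ker\Ps\subseteq\Im\Ph$ by using the no-nonzero-oriented-cycles hypothesis to write $\bph_i(\id_x)=\la_i(x)\id_x$ and then checking that the resulting data satisfy the defining recursions \eqref{eq:phi} and \eqref{eq:phibar}. The only deviations are minor: you prove injectivity of $\Ph$ by evaluating $\bph_i$ at the identities $\id_x$ (the paper instead argues via $\xi(\la_i)=\id_A$ and connectedness), and you re-derive the object-coincidence $\ph_i(x)=\ph_{i+1}(x)$ from basicness of the projectives $Ax$, where the paper simply invokes Lemma \ref{lem:com-Nak}.
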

\begin{proof}
First, by Remark \ref{epim} we already know that $\Ps$ is an epimorphism.
Second, we show that $\Ph$ is injective.
Let $\la \in \Ker \Ph$ and set $\Ph(\la) = (\ph_i, \bph_i)_{i\in \bbZ}$.
Then $(\ph_i, \bph_i)_{i\in \bbZ} = \Ph(\la )=\id _{\hat{A}} = (\id_A, \id_A)_{i\in \bbZ}$.
To show that $\la_i(x)=1$ for all $x\in A$ and $i\in \bbZ$
we show (i) for each $i\in \bbZ$, $\la_i(x)=\la_i(y)=:k_i \in \k$ for all $x,y\in A$;
and (ii) for each $i\in \bbZ$, $k_i=1$. 

To show the statement (i) we first show that $\xi(\la_i) = \id_A$ for all $i \in \bbZ$.
For each $i > 0$, 
$$\id_A = \ph_i = \xi (\la _0\la _1\cdots \la _{i-1})
= \xi (\la _0\la _1\cdots \la _{i-2})\xi (\la _{i-1})
=\ph_{i-1}\xi (\la _{i-1})
=\xi (\la _{i-1}).
$$
For each $i < 0$,
$$
\begin{aligned}
\id_A &= \ph_i =\xi (\la _i\la _{i+1}\cdots \la _{-1})\inv
=(\xi (\la _i)\xi(\la _{i+1}\cdots \la _{-1}))\inv\\
&=\xi(\la _{i+1}\cdots \la _{-1})\inv \xi (\la _i)\inv
=\ph_{i+1}\xi (\la _i)\inv= \xi (\la _i)\inv,
\end{aligned}
$$
thus $\xi(\la_i) = \id_A$.
Accordingly, for each morphism $a\colon x \to y$ in $A$,
we have $a = \xi(\la_i)(a) = \la_i(y)\inv \la_i(x) a$, and hence
$\la_i(x) = \la_i(y)$ for all $i \in \bbZ$.
Since $A$ is connected, there exists some $k_i \in \k^{\times}$ such that $\la_i(x) = k_i$
for all $x \in A_0$.

Now let $a \colon x \to y$ be a morphism in $A$.
Then for each $i \in \bbZ$, $\bph_i(a) = \la_i(x) \ph_i(a) = k_i a$.
Therefore $\id_A=\bph_i = k_i \id_A$, thus $k_i = 1$.
This shows that $\Ph$ is injective.

Next we show that $\Im \Ph =\Ker \Ps$.
Let $\la \in (\k^\times)^{\hat{A}_0}$ and set $\Ph(\la) =(\ph_i,\bph _i)_{i\in \bbZ}$.
Then $\Ps\Ph(\la)=\ph_0=\id_A$ by definition.
Hence $\Im \Ph \subseteq\Ker \Ps$.
Conversely we show that $\Im \Ph \supseteq \Ker \Ps$.
Let $\ps =(\ps _i,\bps _i)_{i\in \bbZ}\in \Ker \Ps$. 
Then $\ps_0=\id_A$. 
By Lemma \ref{lem:com-Nak} we see that $\ps_i=\id_A$ for all $i\in\bbZ$ on the objects.
Since $\bar{\ps}_i \colon A \to {}_{\ps_i}A_{\ps_{i+1}}$ is an isomorphism of $A$-$A$-bimodules, 
$\bps_i(\id_x)\in A(\ps_{i+1}(x),\ps_i(x))=A(x,x)=\k\id_x$
because $A$ has no nonzero oriented cycles. 
Therefore for each $i\in \bbZ$ and $x\in A_0$ there exists $\la _i(x)\in \k^\times$ such that $\bps _i(\id_x)=\la _i(x)\id_x$.
Set $\la:= (\la_i(x))_{(x,i)\in A_0\times\bbZ} \in (\k^{\times})^{\hat{A}_0}$.
It remains to show that $\ps =\Ph(\la)$.
Set $\Ph(\la)=(\ph _i,\bph _i)_{i\in \bbZ}$. 
Then we have only to show
$\ps_i=\ph_i$ and  $\bps_i=\bph_i$  for all $i\in\bbZ$. 

It  follows from $\ps _0=\id _A=\ph _0$ that $\ps_i=\ph_i\  (=\id_A)$ on the objects  for all $i\in\bbZ$ by Lemma \ref{lem:com-Nak}.

For each morphism $a\in A(x,y)\ (x,y\in A)$ we show $\ps_i(a)=\ph_i(a)$ by induction on  $i\ge 1$.
\begin{eqnarray*}
\ph _i(a)&=&\xi(\la _0\la _1\cdots \la _{i-1})(a)\\
         &=&\la _0\cdots \la _{i-1}(x)(\la _0\cdots \la _{i-1}(y))^{-1}a\\
         &=&\la _{i-1}(x)(\la _{i-1}(y))^{-1}\ph_{i-1}(a)\\
         &=&\la _{i-1}(x)(\la _{i-1}(y))^{-1}\ps_{i-1}(a)\\
         &=&(\la _{i-1}(y))^{-1}\ps_{i-1}(a)\la_{i-1}(x)\id_x\\
         &=&(\la _{i-1}(y))^{-1}\ps _{i-1}(a)\bps _{i-1}(\id_x)\\
         &=&(\la _{i-1}(y))^{-1}\bps _{i-1}(a)\\
         &=&(\la _{i-1}(y))^{-1}\bps  _{i-1}(\id_y a)\\
         &=&(\la _{i-1}(y))^{-1}\bps  _{i-1}(\id_y)\ps _i(a)\\
         &=&(\la _{i-1}(y))^{-1}\la _{i-1}(y)\id_y\ps _i(a)\\
         &=&\ps _i(a)
\end{eqnarray*}
Similarly we see that  $\ps_i(a)=\ph_i(a)$ for all  $i < 0$.
Let  $a\in A(x,y)$ with $x,y\in A$.
Then
\begin{eqnarray*}
\bph _i(a)&=&\la _i(x)\ph _i(a)\\
      &=&\la _i(x)\ps _i(a)\\
      &=&\la _i(x)\ps _i(a\id_x)\\
      &=&\ps _i(a)\la _i(x)\ps _i(\id_x)\\
      &=&\ps _i(a)\la _i(x)\id_{\ps_i(x)}\\
     &=&\ps _i(a)\la _i(x)\id_x\\
      &=&\ps _i(a)\bps _i(\id_x)\\
      &=&\bps _i(a)
\end{eqnarray*}
for all $i\in \bbZ$. 
\end{proof}

\begin{rmk}\label{jump-rmk}
\hspace{1cm}
\begin{enumerate}
\item[(1)]By Remark \ref{epim}, the exact sequence in Proposition \ref{rigid-exact} splits.
Therefore we have an isomorphism 
$$
(\k^\times)^{\hat{A}_0} \times \Aut (A) \to \autz
$$
sending $(\la, \si)$ to $\hat{\si} \circ \Ph (\la)   =(\si  \ph_i, \si \bph_i )_{i\in \bbZ}$ by Remark \ref{rmk:comp} (2), 
where $ \Ph (\la) =(\ph_i , \bph_i)_{i\in \bbZ}$. 

Let $\ps =(\ps _i,\bps _i)_{i\in \bbZ}$ be an automorphism of $\hat{A}$ with jump 0.
By the above, we have $\ps =(\si \ph_i, \si \bph_i  )_{i\in \bbZ}$ 
for some $(\la, \si) \in (\k^\times)^{\hat{A}_0} \times \Aut (A)$.
By the equality \eqref{eq:phibar}, 
$\la _i(x)\si \ph _i(a) =\la _i(y)\si \ph _{i+1}(a) $ for all morphism $a\in A(x,y)$.
Therefore, we have
\begin{equation} \label{eq:ind-step}
\ps _{i+1}(a)=\la _i(x)(\la _i(y))^{-1}\ps _i(a). 
\end{equation}
\item[(2)]In Ohnuki-Takeda-Yamagata \cite[section 3]{OTY} 
they gave a surjection 
$$
\operatorname{U}(A)^\bbZ \times \operatorname{aAut} (A) \to \operatorname{aAut}_0 (\hat{A}),
$$
where $\operatorname{aAut} (B) $ denotes the group of algebra automorphisms of an algebra $B$ (here $\hat{A}$ is regarded as an algebra without identity) 
and  $\operatorname{U}(A)$ is the set of all units in $A$.
\end{enumerate}
\end{rmk}

\section{Orbit categories}

We cite some statements from \cite{Asa11, Asa} below.
\begin{dfn}
Let $(\calC ,X)$, $(\calC ',X')$ be categories with $G$-actions.
Then a $G$-{\em equivariant} functor is a pair $(F,\et)$
of a functor $F\colon \calC \to \calC'$ and a family $\et =(\et _\al )_{\al \in G}$
of natural isomorphisms $\et _\al :X'_\al F\Rightarrow FX_\al$ ($\al \in G$)
such that the following diagram commutes for all $\al ,\be \in G$
\[ \xymatrix{ X'_{\be\al }F=X'_\be X'_\al F & X'_\be FX_\al \\
                                         & FX_{\be \al}=FX_\be X_\al.
              \ar_{\et _{\be \al}}"1,1";"2,2"
              \ar^(0.6){X'_\be \et _\al}"1,1";"1,2"
              \ar^{\et _\be X_\al}"1,2";"2,2"}
\]
\end{dfn}

\begin{dfn}
Let $(E,\rho),(E',\rho')\colon\mathcal{C}\to\mathcal{C}'$ be $G$-equivariant
functors. Then a \emph{morphism} from $(E,\rho)$ to $(E',\rho')$
is a natural transformation $\eta\colon E\To E'$ such that
the diagrams
$$
\xymatrix{
A_{a}E & EA_a\\
A_aE' &E'A_a
\ar@{=>}^{\ro_a}"1,1";"1,2"
\ar@{=>}_{\ro'_a}"2,1";"2,2"
\ar@{=>}_{A_a\et}"1,1";"2,1"
\ar@{=>}^{\et A_a}"1,2";"2,2"
}
$$
commute for all $a\in G$.
\end{dfn}

\begin{lem}
\label{lem:eqv-eqv-is-eqv}Let $ $$\xymatrix{\mathcal{C}\ar[r]^{(E,\rho)} & \mathcal{C}'\ar[r]^{(E',\rho')} & \calC''}
$ $ $be $G$-equivariant functors of $G$-categories. Then

$(1)$ $(E'E,((E'\rho_{a})(\rho_{a}'E))_{a\in G})\colon\mathcal{C}\to\mathcal{C}''$
is a $G$-equivariant functor, which we define to be the composite
$(E',\rho')(E,\rho)$ of $(E,\rho)$ and $(E',\rho')$.

$(2)$ If further $(E'',\rho'')\colon\mathcal{C}''\to\mathcal{C}'''$
is a $G$-equivariant functor, then we have \[
((E,\rho)(E',\rho'))(E'',\rho'')=(E,\rho)((E',\rho')(E'',\rho'')).\]
\end{lem}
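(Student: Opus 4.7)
The plan is to verify both assertions by diagram-chasing with natural transformations, the key tool being the middle-four interchange law for horizontal composition.

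For (1), set $\tau_a := (E'\rho_a)(\rho'_a E) \colon X''_a E'E \To E'E X_a$ for each $a \in G$. The task is to check the coherence identity $\tau_{\beta\alpha} = (\tau_\beta X_\alpha)(X''_\beta \tau_\alpha)$ for all $\alpha, \beta \in G$. First I would expand $\tau_{\beta\alpha} = (E'\rho_{\beta\alpha})(\rho'_{\beta\alpha} E)$ using the coherence conditions satisfied individually by $\rho$ and $\rho'$, namely $\rho_{\beta\alpha} = (\rho_\beta X_\alpha)(X'_\beta \rho_\alpha)$ and $\rho'_{\beta\alpha} = (\rho'_\beta X'_\alpha)(X''_\beta \rho'_\alpha)$; distributing the outer whiskerings by $E'$ on the left and by $E$ on the right produces the four-fold composite $(E'\rho_\beta X_\alpha)(E'X'_\beta \rho_\alpha)(\rho'_\beta X'_\alpha E)(X''_\beta \rho'_\alpha E)$. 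Expanding the right-hand side $(\tau_\beta X_\alpha)(X''_\beta \tau_\alpha)$ directly yields $(E'\rho_\beta X_\alpha)(\rho'_\beta E X_\alpha)(X''_\beta E' \rho_\alpha)(X''_\beta \rho'_\alpha E)$. The two outermost factors agree on the nose, and the equality of the two middle factors is precisely the interchange law applied to the horizontal composition of $\rho'_\beta \colon X''_\beta E' \To E' X'_\beta$ with $\rho_\alpha \colon X'_\alpha E \To E X_\alpha$.

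For (2), which asserts associativity of the composition constructed in (1), apply that construction twice to each of the two bracketings and observe that on the underlying functor both sides reduce to $E''E'E$ and that on the natural transformation component both sides collapse to the same three-fold composite $(E''E'\rho_a)(E''\rho'_a E)(\rho''_a E'E) \colon X'''_a E''E'E \To E''E'E X_a$. Hence the two iterated composite equivariant functors coincide on the nose, with matching coherence data.

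The only subtle point is bookkeeping of left- versus right-whiskerings in step (1); once the computation is organized around the horizontal composite of $\rho'_\beta$ and $\rho_\alpha$, the interchange law applies unambiguously and both parts then reduce to a routine formal expansion. No free-action hypothesis or property of $\hat{A}$ is used — the argument is purely at the level of $2$-categorical coherence.
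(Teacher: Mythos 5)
Your proof is correct. Note that the paper itself offers no proof of this lemma --- it is quoted verbatim from \cite{Asa11, Asa} in the block of cited statements at the start of Section 3 --- so there is nothing internal to compare against; your verification (expanding both sides of the coherence identity via the coherence conditions for $\rho$ and $\rho'$, matching the outer factors, and identifying the two inner composites as the two evaluations of the horizontal composite $\rho'_\beta * \rho_\alpha$, equal by naturality of $\rho'_\beta$/the interchange law) is exactly the standard argument one would expect in the cited source, and part (2) is, as you say, an immediate consequence of whiskering distributing strictly over vertical composition. One small wording caution: the two middle factors do not agree individually, only their composites do, which is what the interchange law gives you --- your phrasing should make that explicit.
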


\begin{dfn}\label{dfn:G-Cat}
A 2-category $G$-$\mathbf{Cat}$ is defined as follows.
\begin{itemize}
\item The objects are the small $G$-categories.
\item The 1-morphisms are the $G$-equivariant functors between objects.
\item The identity 1-morphism of an object $\calC$ is the 1-morphism
$(\id_\calC, (\id_{\id_\calC})_{a \in G})$.
\item The 2-morphisms are the morphisms of $G$-equivariant functors.
\item The identity 2-morphism of a 1-morphism $(E, \ro) \colon \calC \to \calC'$
is the identity natural transformation  $\id_E$ of $E$, which is clearly a 2-morphism.
\item The composition of 1-morphisms is the one given in the previous lemma.
\item The vertical and the horizontal compositions of 2-morphisms are given
by the usual ones of natural transformations.
\end{itemize}
\end{dfn}

\begin{thm}\label{eqvar-eq}
Let $(E,\rho)\colon\mathcal{C}\to\mathcal{C}'$ be a $G$-equivariant functor in $\GCat$.
Then the following are equivalent.
\begin{enumerate}
\item $(E, \rho)$ is an equivalence in $\GCat$;
\item $E$ is fully faithful and dense
$($i.e., $E$ is a category equivalence$)$.
\end{enumerate}
Thus the {\em $G$-equivariant equivalences} 
are exactly the equivalences in $\GCat$.
\end{thm}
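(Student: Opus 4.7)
The implication (1) $\Rightarrow$ (2) is immediate: an equivalence $(E,\rho)$ in $\GCat$ admits a 1-morphism $(E',\rho')$ and invertible 2-morphisms $(E',\rho')(E,\rho)\To(\id_\calC,(\id_{\id_\calC})_a)$ and $(E,\rho)(E',\rho')\To(\id_{\calC'},(\id_{\id_{\calC'}})_a)$; the underlying natural transformations are natural isomorphisms $E'E\iso\id_\calC$ and $EE'\iso\id_{\calC'}$, so $E$ is a category equivalence.

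For the substantive direction (2) $\Rightarrow$ (1), the plan is to construct a $G$-equivariant quasi-inverse directly. First I would invoke the standard fact that any category equivalence can be strengthened to an adjoint equivalence: choose a functor $E'\colon \calC'\to\calC$ together with natural isomorphisms $\et\colon \id_\calC\To E'E$ and $\ep\colon EE'\To\id_{\calC'}$ satisfying the triangle identities $(\ep E)(E\et)=\id_E$ and $(E'\ep)(\et E')=\id_{E'}$. Then, for each $a\in G$, I would define a natural isomorphism $\ro'_a\colon X_a E'\To E'X'_a$ by conjugating $\ro_a^{-1}$ through the equivalence, namely as the composite
\[
X_a E' \xRightarrow{\et X_aE'} E'EX_aE' \xRightarrow{E'\ro_a^{-1}E'} E'X'_aEE' \xRightarrow{E'X'_a\ep} E'X'_a.
\]

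The core technical step is to verify that the family $\ro'=(\ro'_a)_{a\in G}$ satisfies the cocycle condition $\ro'_{\be\al}=(\ro'_\be X'_\al)(X_\be\ro'_\al)$ required by the definition of a $G$-equivariant functor. This is a pasting-diagram computation: expanding $(\ro'_\be X'_\al)(X_\be\ro'_\al)$ produces two copies of an interior factor of the form $(\ep E)(E\et)$ on the $E$-side (after shuffling cells via naturality of $\et$ and $\ep$), which collapse to identities by the triangle identities; what remains is $E'$ applied to the composite $\ro_\be X_\al\circ X'_\be\ro_\al$, whose inverse is $\ro_{\be\al}^{-1}$ by the cocycle condition for $\ro$. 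Once this is established, I would check that $\et$ and $\ep$ are 2-morphisms in $\GCat$, i.e.\ that the requisite squares
\[
\xymatrix@C=36pt{
X_a\id_\calC \ar@{=>}[r]^{X_a\et}\ar@{=>}[d]_{\id_{X_a}} & X_aE'E \ar@{=>}[d]^{\ro'_a E} \\
\id_\calC X_a \ar@{=>}[r]_{\et X_a} & E'EX_a
}
\qquad
\xymatrix@C=36pt{
X'_aEE' \ar@{=>}[r]^{X'_a\ep}\ar@{=>}[d]_{\ro_a E'} & X'_a\id_{\calC'} \ar@{=>}[d]^{\id_{X'_a}} \\
EX_aE' \ar@{=>}[r]_{E\ro'_a} & EE'X'_a
}
\]
commute; both follow by unwinding the definition of $\ro'_a$ and one more application of the triangle identities.

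The main obstacle is the cocycle verification in the second paragraph: it requires pasting together six 2-cells and invoking the triangle identity at precisely the point where the middle composite $(\ep E)(E\et)$ appears, all while keeping track of whiskering on either side by $X_\be$, $X'_\al$, and $E'$. Once the cocycle condition is in hand, $(E',\ro')$ is a well-defined 1-morphism in $\GCat$, and $\et,\ep$ become invertible 2-morphisms exhibiting it as a quasi-inverse to $(E,\ro)$, proving $(E,\ro)$ is an equivalence in $\GCat$.
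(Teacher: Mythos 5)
Your argument is essentially correct, but note first that this paper does not prove Theorem \ref{eqvar-eq} at all: it is one of the statements quoted from \cite{Asa11, Asa} at the start of Section 3, so there is no in-paper proof to compare against. What you give is the standard ``doctrinal adjunction'' argument, and it is the right one: (1) $\Rightarrow$ (2) because invertible $2$-morphisms in $\GCat$ are exactly the natural isomorphisms satisfying the compatibility squares; and for (2) $\Rightarrow$ (1) the two genuinely necessary ideas are both present, namely upgrading the equivalence to an \emph{adjoint} equivalence (without the triangle identities the cocycle verification would not close up) and defining $\ro'_a$ as the mate of $\ro_a^{-1}$ under $E\dashv E'$. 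The cocycle identity $\ro'_{\be\al}=(\ro'_\be X'_\al)(X_\be\ro'_\al)$ then follows from the functoriality of the mate calculus; carrying out the shuffle one finds that a single interior copy of $(\ep E)(E\et)$ collapses (your ``two copies'' overstates what is needed, but the mechanism is the one you describe). Two small corrections to the write-up: the compatibility squares you display for $\et$ and $\ep$ are not well-typed as drawn --- the structure $2$-cell of the composite $(E',\ro')(E,\ro)$ is $(E'\ro_a)(\ro'_a E)\colon X_aE'E\To E'EX_a$, not $\ro'_aE$ alone, and dually the square for $\ep$ must use $(E\ro'_a)(\ro_aE')$ together with $\ep X'_a$ along the bottom; once stated correctly both squares again follow from the definition of $\ro'_a$ and one triangle identity. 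Finally, you should record the (easy) observation that a $2$-morphism whose underlying natural transformation is invertible has an inverse that is again a $2$-morphism, so that $\et$ and $\ep$ really exhibit $(E',\ro')$ as a quasi-inverse in $\GCat$.
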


\begin{dfn}\label{dfn-graded-cat}
$(1)$ A $G$-\emph{graded} category is a category $\mathcal{B}$
together with a family of direct sum decompositions $\mathcal{B}(x,y)=\bigoplus_{a\in G}\mathcal{B}^{a}(x,y)$
$(x,y\in\mathcal{B})$ of $\Bbbk$-modules such that $\mathcal{B}^{b}(y,z)\cdot\mathcal{B}^{a}(x,y)\subseteq\mathcal{B}^{ba}(x,z)$
for all $x,y\in\mathcal{B}$ and $a,b\in G$. If $f\in\mathcal{B}^{a}(x,y)$
for some $a\in G$, then we say that $f$ is {\em homogeneous}
of {\em degree} $a$.

$(2)$ A \emph{degree-preserving} functor is a pair $(H,r)$ of a
functor $H\colon\mathcal{B}\to\mathcal{A}$ of $G$-graded categories
and a map $r\colon \mathcal{B}_0\to G$ such that $H(\mathcal{B}^{r_{y}a}(x,y))\subseteq\mathcal{A}{}^{ar_{x}}(Hx,Hy)$
for all $x,y\in\mathcal{B}$ and $a\in G$.
This $r$ is called a \emph{degree adjuster} of $H$.

$(3)$ A functor $H\colon\mathcal{B}\to\mathcal{A}$ of $G$-graded
categories is called a \emph{strictly} degree-preserving functor if
$(H,1)$ is a degree-preserving functor, where 1 denotes the constant
map $\mathcal{B}_0\to G$ with value $1\in G$, i.e., if $H(\mathcal{B}^{a}(x,y))\subseteq\mathcal{A}^{a}(Hx,Hy)$
for all $x,y\in\mathcal{B}$ and $a\in G$.

$(4)$
 A functor $H \colon \calB \to  \calA$  of $G$-graded categories
is said to be {\em homogeneously dense}
if for each $x \in \calA_0$ there exists some $y \in \calB_0$ such that
there exists a homogeneous isomorphism $x \to H(y)$.

$(5)$ Let $(H,r),(I,s)\colon\mathcal{B}\to\mathcal{A}$ be degree-preserving
functors. Then a natural transformation $\theta\colon H\To I$ is
called a \emph{morphism} of degree-preserving functors if $\theta x\in\mathcal{A}{}^{s_{x}^{-1}r_{x}}(Hx,Ix)$
for all $x\in\mathcal{B}$.

\end{dfn}

The composite of degree-preserving functors can be made into again
a degree-preserving functor as follows.
\begin{lem}
Let ${}$$\xymatrix{\mathcal{B}\ar[r]^{(H,r)} & \mathcal{B}'\ar[r]^{(H',r')} & \mathcal{B}''}
$ be degree-preserving functors. Then \[
(H'H,(r_{x}r'_{Hx})_{x\in\mathcal{B}})\colon\mathcal{B}\to\mathcal{B}''\]
 is also a degree-preserving functor, which we define to be the \emph{composite}
$(H',r')(H,r)$ of $(H,r)$ and $(H',r')$.\end{lem}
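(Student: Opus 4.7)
The plan is to verify directly that the pair $(H'H, (r_x r'_{Hx})_{x\in \mathcal{B}})$ satisfies the inclusion demanded by Definition \ref{dfn-graded-cat}(2), by applying the degree-preserving property of $(H,r)$ and of $(H',r')$ in succession. Setting $s_x := r_x r'_{Hx}$ for $x\in\mathcal{B}_0$, the only substantive thing to check is
\[
H'H\bigl(\mathcal{B}^{s_y a}(x,y)\bigr)\subseteq \mathcal{B}''^{a s_x}(H'Hx,H'Hy)
\]
for all $x,y\in\mathcal{B}_0$ and $a\in G$; functoriality of $H'H$ is automatic, and the degree adjuster of the composite is by construction the family $(s_x)_{x\in\mathcal{B}_0}$.

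First I would re-associate $s_y a = r_y(r'_{Hy} a)$, so that the exponent sits in the form $r_y c$ with $c:=r'_{Hy} a$ for which the degree-preservation of $(H,r)$ applies verbatim. This produces
\[
H\bigl(\mathcal{B}^{s_y a}(x,y)\bigr)\subseteq \mathcal{B}'^{c r_x}(Hx,Hy)=\mathcal{B}'^{r'_{Hy}(a r_x)}(Hx,Hy).
\]
Next I would apply $H'$, now reading the exponent on the right as $r'_{y'} b$ with $y':=Hy$ and $b:=a r_x$; the degree-preservation of $(H',r')$ then carries this inclusion into $\mathcal{B}''^{b r'_{Hx}}(H'Hx,H'Hy)$, which equals $\mathcal{B}''^{a r_x r'_{Hx}}(H'Hx,H'Hy)=\mathcal{B}''^{a s_x}(H'Hx,H'Hy)$ by associativity of multiplication in $G$, completing the verification.

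I do not anticipate any real obstacle here: the argument reduces to two applications of the given inclusions chained by associativity in $G$. The one thing that needs attention is matching the parentheses on the grading labels at each step, so that the hypothesis on $(H,r)$ (resp.\ $(H',r')$) is invoked in exactly the form $r_y \cdot (\,\cdot\,)$ (resp.\ $r'_{y'} \cdot (\,\cdot\,)$); once that bookkeeping is set up as above, the conclusion is immediate.
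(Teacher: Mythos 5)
Your verification is correct, and it is exactly the routine check the paper intends: the paper states this lemma without proof, and your two applications of the defining inclusion (first for $(H,r)$ with group element $r'_{Hy}a$, then for $(H',r')$ with group element $ar_x$), chained by associativity in $G$, is the standard argument.
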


\begin{dfn}\label{dfn:G-GrCat}
A 2-category $G$-$\mathbf{GrCat}$ is defined as follows.
\begin{itemize}
\item The objects are the small $G$-graded categories.
\item The 1-morphisms are the degree-preserving functors between objects.
\item
The identity 1-morphism of an object $\calB$ is the 1-morphism $(\id_\calB, 1)$.
\item The 2-morphisms are the morphisms of degree-preserving functors.
\item
The identity 2-morphism of a 1-morphism $(H, r)\colon \calB \to \calA$
is the identity natural transformation $\id_H$ of $H$, which is a 2-morphism
(because $(\id_{H})x =\id_{Hx} \in \calA^1(Hx, Hx) = \calA^{r_x\inv r_x}(Hx, Hx)$
 for all $x \in \calB$).
\item The composition of 1-morphisms is the one given in the previous lemma.
\item The vertical and the horizontal compositions of 2-morphisms are given
by the usual ones of natural transformations.
\end{itemize}
\end{dfn}

\begin{thm}\label{eq-GrCat}
Let $(H,r) \colon\mathcal{B}\to\mathcal{A}$ be a degree-preserving functor in $\GGrCat$.
Then the following are equivalent.
\begin{enumerate}
\item $(H,r)$ is an equivalence in $\GGrCat$.
\item $H$ is fully faithful and homogeneously dense.
\end{enumerate}
\end{thm}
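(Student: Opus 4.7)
My approach is to adapt the standard proof that a fully faithful and essentially surjective functor is an equivalence of categories to the $G$-graded setting. For $(1)\Rightarrow(2)$, suppose $(K,s)$ is a quasi-inverse of $(H,r)$ in $\GGrCat$, with invertible 2-morphisms $\et\colon\id_\calB\To KH$ and $\ep\colon HK\To\id_\calA$. The usual argument via $\et$ shows $H$ is faithful, and combined with invertibility of $\ep$ it gives fullness. Homogeneous density is then immediate: each component $\ep_x\colon HKx\to x$ is, by the very definition of a 2-morphism in $\GGrCat$, a homogeneous isomorphism, so $x$ is homogeneously isomorphic to $H(Kx)$.

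For $(2)\Rightarrow(1)$ I will construct a quasi-inverse $(K,s)$ by hand. Using homogeneous density, I would choose for each $x\in\calA_0$ an object $Kx\in\calB_0$ together with a homogeneous isomorphism $\ep_x\colon HKx\to x$ in $\calA$, and write $t_x\in G$ for its degree. For each homogeneous $f\in\calA^a(x,y)$, fully faithfulness of $H$ supplies a unique $K(f)\in\calB(Kx,Ky)$ with $H(K(f))=\ep_y\inv\circ f\circ\ep_x$; extending $K$ linearly to arbitrary morphisms yields a candidate functor, and functoriality follows in the standard way from faithfulness of $H$ together with the defining equation.

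The first technical point is that $K$ automatically sends homogeneous morphisms to homogeneous morphisms of a specific degree. Decomposing $K(f)=\sum_c g_c$ into its homogeneous components and applying $H$, the degree-preserving property of $(H,r)$ places $H(g_c)$ in $\calA^{r_{Ky}\inv c r_{Kx}}(HKx,HKy)$. Since $\ep_y\inv\circ f\circ\ep_x$ is homogeneous of degree $t_y\inv a t_x$ and $H$ is faithful, every $g_c$ vanishes except the one with $c=r_{Ky}t_y\inv a t_x r_{Kx}\inv$. Setting $s_x:=t_x r_{Kx}\inv$, this reads $K(\calA^{s_y a'}(x,y))\subseteq\calB^{a' s_x}(Kx,Ky)$, so $(K,s)$ is a degree-preserving functor.

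It then remains to package $\ep$ and a dual family $\et$ into 2-morphisms in $\GGrCat$. Naturality of $\ep=(\ep_x)_x$ holds by construction, and the adjuster of the composite $(H,r)(K,s)$ evaluates at $x$ to $s_x r_{Kx}=t_x$, so $\ep_x\in\calA^{t_x}(HKx,x)$ is of exactly the degree required of a 2-morphism $(HK,(t_x)_x)\To(\id_\calA,1)$. Dually, I would define $\et_x\colon x\to KHx$ as the unique morphism with $H(\et_x)=\ep_{Hx}\inv$ via fully faithfulness; it comes out homogeneous of the correct degree $r_{KHx}t_{Hx}\inv r_x\inv$, and the family $(\et_x)_x$ assembles into a 2-morphism $(\id_\calB,1)\To(KH,(r_x s_{Hx})_x)$. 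The triangle identities then follow from fully faithfulness of $H$ and the defining equations of $K$, $\ep$, $\et$. The main obstacle I expect is not conceptual but purely bookkeeping: one has to track the three group elements $r_{Kx}$, $s_x$, $t_x$ at every object, and the choice $s_x:=t_x r_{Kx}\inv$ is forced precisely by the requirement that the composite adjusters match the degrees of $\ep_x$ and $\et_x$.
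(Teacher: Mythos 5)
The paper itself gives no proof of Theorem \ref{eq-GrCat}: it is quoted verbatim from \cite{Asa} (Section 3 opens with ``We cite some statements from \cite{Asa11, Asa} below''), so there is no in-paper argument to compare yours against. Judged on its own, your proof is correct and is the natural graded refinement of the classical ``fully faithful $+$ dense $\Rightarrow$ equivalence'' construction. The degree bookkeeping is right: for homogeneous $f$ of degree $a$ the element $\ep_y\inv f\ep_x$ has degree $t_y\inv a t_x$, the uniqueness of homogeneous decompositions kills all components of $K(f)$ except the one of degree $r_{Ky}t_y\inv a t_x r_{Kx}\inv$, and the choice $s_x:=t_x r_{Kx}\inv$ makes the composite adjusters equal $s_xr_{Kx}=t_x$ and $r_xs_{Hx}$, which are exactly the degrees demanded of $\ep_x$ and $\et_x$ by Definition \ref{dfn-graded-cat}(5) applied to $(HK,(t_x)_x)\To(\id_{\calA},1)$ and $(\id_{\calB},1)\To(KH,(r_xs_{Hx})_x)$. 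Two minor points you use tacitly: (i) the inverse of a homogeneous isomorphism is again homogeneous, of inverse degree (a one-line check from the uniqueness of the decomposition $\calA(x,y)=\bigoplus_a\calA^a(x,y)$); this is needed both in $(1)\Rightarrow(2)$ to turn $\ep_x$ into a homogeneous isomorphism $x\to H(Kx)$, and in $(2)\Rightarrow(1)$ to obtain $\ep_x\colon HKx\to x$ from homogeneous density and to see that $\ep\inv$, $\et\inv$ are again 2-morphisms of $\GGrCat$; (ii) for an equivalence in the 2-category one only needs $\et$, $\ep$ to be invertible 2-morphisms, so the triangle identities you verify are a bonus (they do hold with your choices, giving an adjoint equivalence) rather than a requirement.
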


The following statement follows from
\cite[Definition 7.1, Theorem 7.5, Theorem 9.1 and Theorem 9.5]{Asa}

\begin{thm}\label{orbitcat-eq}
Let $(\calC ,X)$, $(\calC ',X')$ be categories with $G$-actions.  
Then the following are equivalent.
\begin{enumerate}
\item
There exists a $G$-equivariant equivalence $\calC \to \calC '$.
\item
There exists a strictly degree preserving, homogeneously dense equivalence
$\calC /G \to \calC '/G$ of $G$-graded categories.
\item
There exists a degree preserving, homogeneously dense equivalence
$\calC /G \to \calC '/G$ of $G$-graded categories.
\end{enumerate}
\end{thm}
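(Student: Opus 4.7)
My plan is to apply Theorem~\ref{orbitcat-eq}, which reduces the problem to producing a $\bbZ$-equivariant equivalence from $(\hat{A},\langle\ph\rangle)$ to $(\hat{A},\langle\ps\rangle)$. I would take the underlying functor to be $\id_{\hat{A}}$ and supply a natural isomorphism $\et\colon\ps\Rightarrow\ph$ of endofunctors of $\hat{A}$ as the equivariance data; since $\bbZ$ is cyclic, once $\et=\et_1$ is specified the cocycle condition propagates automatically. Because $A$ has no nonzero oriented cycles, so does $\hat{A}$ by Remark~\ref{rmk:rpt-noc}, and hence $\End_{\hat{A}}(\ps(x^{[i]}))=\k\cdot\id$. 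Since $\ph$ and $\ps$ coincide on objects, each component $\et_{x^{[i]}}$ is a scalar $\la(x^{[i]})\cdot\id$ with $\la(x^{[i]})\in\k^\times$, and the naturality of $\et$ becomes the family of scalar equations
\[
\la(y^{[j]})\,\ps(a)=\la(x^{[i]})\,\ph(a)\qquad\text{for each }a\colon x^{[i]}\to y^{[j]}\text{ in }\hat{A}.
\]

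To produce such a $\la$ I would use the structure theory of Section~2. By Proposition~\ref{jump}, factor $\ph=\ph_L\nu_A^n$ and $\ps=\ps_L\nu_A^n$ with $\ph_L,\ps_L\in\autz$ (which still coincide on the objects of $\hat{A}$). By Proposition~\ref{rigid-exact}, further decompose $\ph_L=\hat{\ph_0}\circ\Ph(\la^\ph)$ and $\ps_L=\hat{\ps_0}\circ\Ph(\la^\ps)$ for some $\la^\ph,\la^\ps\in(\k^\times)^{\hat{A}_0}$, so that on each copy $A^{[j]}\subseteq\hat{A}$ the functor $\ph$ acts as $\ph_0$ twisted by the diagonal scalar $\xi(\prod_{k<j}\la^\ph_k)$ from Lemma~\ref{constr-auto}, and similarly for $\ps$. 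Following Saor\'in's idea, the hypothesis $\ro_0(y)\ph_0(a)=\ps_0(a)\ro_0(x)$ reads exactly as the assertion that $\ro_0$ is a natural isomorphism $\ph_0\Rightarrow\ps_0$ of endofunctors of $A$; combined with the cumulative-scalar picture, this suggests setting
\[
\la(x^{[i]}):=K_i\cdot\ro_0(x)^{-1}\cdot\frac{\prod_{k<i+n}\la^\ps_k(x)}{\prod_{k<i+n}\la^\ph_k(x)}
\]
for a level-dependent constant $K_i\in\k^\times$ to be fixed from the dual-arrow condition (with the convention of Lemma~\ref{constr-auto} for the products when $i+n<0$).

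Verifying naturality splits according to the two types of morphisms of $\hat{A}$. On an algebra-type arrow $f^{[i]}\in\hat{A}(x^{[i]},y^{[i]})$ with $f\in A(x,y)$, substituting the explicit forms of $\ph(f^{[i]}),\ps(f^{[i]})$ and using the $\ro_0$-hypothesis reduces the naturality equation to a ratio identity that is satisfied by the above $\la$ independently of $K_i$. On a dual arrow $\be^{[i]}\in\hat{A}(x^{[i]},y^{[i+1]})$ with $\be\in DA(y,x)$, the action of $\Ph(\la^\ph)$ is through the dual $D(\bph_{i+n})\inv$ via diagram~\eqref{eq:dfn-phi-2}, while the action of $\hat{\ph_0}$ on dual arrows is given by Lemma~\ref{repet-auto}(2)(b)(iv); dualising the $\ro_0$-relation gives $\be\circ\ps_0\inv=\ro_0(x)\ro_0(y)\inv\,\be\circ\ph_0\inv$, and matching scalars on both sides yields a recursion for $K_{i+1}/K_i$ whose solvability and independence of the choice of $\be$ follow from the bimodule-isomorphism property of the $\bph_i$'s recorded in Lemma~\ref{constr-auto}.

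Once $\la$ is in hand, Theorem~\ref{orbitcat-eq} converts the resulting $\bbZ$-equivariant equivalence $(\id_{\hat{A}},\et)$ into a degree-preserving, homogeneously dense equivalence $\hat{A}/\langle\ph\rangle\to\hat{A}/\langle\ps\rangle$ of $\bbZ$-graded categories. When $n\ne 0$ the $\langle\ph\rangle$-action on $\hat{A}$ is free, both orbit categories are locally bounded and basic in Gabriel's sense, and the equivalence is automatically an isomorphism of $\bbZ$-graded algebras, as used repeatedly in the introduction; for $n=0$ the theorem claims only the equivalence of $\bbZ$-categories, which is exactly what Theorem~\ref{orbitcat-eq} provides. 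I expect the main obstacle to be the dual-arrow case: because the linear dual in $D(\bph_{i+n})\inv$ inverts the cumulative scalars, the constants $K_i$ must be chosen so as to absorb this sign twist while remaining compatible, for every $i\in\bbZ$, with the single relation forced by the algebra-type case, and this is precisely where Saor\'in's reformulation of the hypothesis as a natural isomorphism $\ph_0\Rightarrow\ps_0$ will be used in full.
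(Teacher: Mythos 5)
Your proposal does not address the statement it was asked to prove. The statement is Theorem \ref{orbitcat-eq} itself --- the general assertion that for arbitrary categories with $G$-actions, the existence of a $G$-equivariant equivalence $\calC\to\calC'$ is equivalent to the existence of a (strictly) degree preserving, homogeneously dense equivalence $\calC/G\to\calC'/G$. Your very first sentence is ``My plan is to apply Theorem~\ref{orbitcat-eq}'', and the rest of the argument constructs the natural isomorphism $\et\colon\ps\To\ph$ from the hypothesis $\ro_0(y)\ph_0(a)=\ps_0(a)\ro_0(x)$; that is a (reasonable) sketch of Theorem \ref{gme-iso} (essentially the content of Proposition \ref{orbitcat-iso}, $(1)\Rightarrow(3)$, specialized to $\hat A$), not of Theorem \ref{orbitcat-eq}. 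As a proof of the stated theorem it is circular: the statement being proved is invoked as the main tool.

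What an actual proof of Theorem \ref{orbitcat-eq} requires is quite different in nature and does not involve $\hat A$, $\autz$, or the no-nonzero-oriented-cycles hypothesis at all. In the paper this theorem is not proved but cited from \cite{Asa}: the point is the 2-categorical Cohen--Montgomery duality, namely that the orbit category construction $(\blank)/G\colon\GCat\to\GGrCat$ is a 2-functor admitting the smash product $(\blank)\#G$ as a pseudo-inverse, with unit and counit that are equivalences. Given that, a $G$-equivariant equivalence maps under the 2-functor to an equivalence in $\GGrCat$, which by Theorem \ref{eq-GrCat} is exactly a degree preserving (indeed the image is strictly degree preserving), homogeneously dense equivalence; conversely one transports an equivalence $\calC/G\to\calC'/G$ back through the smash product and the unit/counit to obtain an equivalence in $\GCat$, which by Theorem \ref{eqvar-eq} is a $G$-equivariant equivalence. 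None of this machinery appears in your proposal. If your goal was Theorem \ref{gme-iso}, your outline is broadly in the spirit of the paper's argument there; but for the statement actually posed, the proof is missing entirely.
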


\begin{rmk}
If $\calC'/G$ has the property that $(\calC'/G)(x, x)$ is local for all
$x \in (\calC'/G)_0$,
e.g., if $\calC'$ is locally bounded, then all equivalences $\calC/G \to \calC'/G$
are automatically homogeneously dense by \cite[Example 9.4(1)]{Asa}.
\end{rmk}

\begin{dfn}\label{dfn-walk}
Let $Q=(Q_0,Q_1,s,t)$ be a quiver.

$(1)$ The quiver $\overline{Q}=(Q_0,Q_1\sqcup Q'_1,\ovl{s},\ovl{t})$ is called the \emph{double quiver} of $Q$,
where $Q'_1:=\{a\inv \mid a \in Q_1\}$
and $\ovl{s}(a):=s(a), \ovl{s}(a\inv):=t(a), \ovl{t}(a):=t(a),\ovl{t}(a\inv):=s(a)$ for each $a \in Q_1$.
A path in $\ovl{Q}$ is called a {\em walk} in $Q$.
The {\em length} $|w|$ of a walk $w$ is the length of $w$ as a path in $\ovl{Q}$.

$(2)$ A walk $C=l_n \dots l_1$
$(l_1, \dots, l_n\in \ovl{Q}_1)$ in $Q$
is called a \emph{cycle} in $Q$ if $s(l_1)=t(l_n)$.
The cycle $C$ is called {\em simple} if $s(l_i)\neq s(l_j)$ for all $i\neq j$.

$(3)$ Let $C = l_m\cdots l_1$ and $C'=l'_n\cdots l'_1$ be simple cycles in $Q$
and $c\in\{1,\dots, m\}$.
Then we set $C[c]:= l_{\ovl{m+c}}\ \cdots \ l_{\ovl{1+c}}$,
where $\ovl{j}$ is the number in $\{1, \dots, m\}$ such that
$\ovl{j} \equiv j \pmod{m}$ for all integers $j$. 
We say that $C$ and $C'$ are \emph{equivalent} if $m=n$
and there exists some $c\in\{1,\dots,m\}$
such that  $C[c]= C'$ or ${C'}\inv$,

(4) Let $(\ph_a)_{a\in Q_1} \in  (\k^\times)^{Q_1}$ be a sequence and 
$C = l_n\cdots l_1$ be a walk in $Q$.
Then we set $\ph_C:=\ph_{l_n}\dots\ph_{l_{1}}$,
where $\ph_{a\inv}:=\ph^{-1}_a$ for each $a \in Q_1$.
\end{dfn}

\begin{prp}\label{tekito}
Let $Q$ be a quiver and $(\ph_a)_{a\in Q_1}, (\ps_a)_{a\in Q_1} \in  (\k^\times)^{Q_1}$ be
sequences.

$(1)$ We have $\ph_C=\ph_{C[c]}$ for all integers $c$ with $1 \le c \le |C|$.

$(2)$
We fix a complete set $\calS$ of representatives of equivalence classes of simple cycles in $Q$.
Then $\ph_{S}=\ps_{S}$ for all $S \in\calS$ if and only if
$\ph_{C}=\ps_{C}$ for all cycles $C$ in $Q$.
\end{prp}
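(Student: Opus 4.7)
The plan is to exploit the commutativity of the abelian group $\k^{\times}$. For part (1), write $\ph_C=\ph_{l_m}\cdots\ph_{l_1}$ (with $\ph_{l_i}=\ph_a^{-1}$ whenever $l_i=a^{-1}$); the walk $C[c]$ is obtained from $C$ by cyclically permuting these factors, and since $\k^{\times}$ is abelian the product is unchanged, so $\ph_C=\ph_{C[c]}$.

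For part (2), the implication from the cycle-version to the $\calS$-version is immediate because $\calS$ consists of cycles. So I would assume $\ph_S=\ps_S$ for every $S\in\calS$ and prove the converse by strong induction on $|C|$. If $C$ is simple, then by the definition of $\calS$ there exist $S\in\calS$ and $c\in\{1,\dots,|C|\}$ with $C[c]=S$ or $C[c]=S^{-1}$; by part (1) we have $\ph_C=\ph_{C[c]}$, and since $\ph_{T^{-1}}=\ph_T^{-1}$ for any cycle $T$ (a routine identity in the group $\k^{\times}$), either case reduces the claim to $\ph_S=\ps_S$. If $C=l_n\cdots l_1$ is not simple, I would take the smallest index $j$ such that $s(l_j)=s(l_i)$ for some $i<j$; by minimality of $j$ the sub-walk $S':=l_{j-1}\cdots l_i$ is a simple cycle, and the remaining arrows assemble into a strictly shorter cycle $C':=l_n\cdots l_j\, l_{i-1}\cdots l_1$, the walk condition at the splice being $t(l_{i-1})=s(l_i)=s(l_j)$. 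Commutativity of $\k^{\times}$ then gives $\ph_C=\ph_{S'}\ph_{C'}$ and likewise $\ps_C=\ps_{S'}\ps_{C'}$, so the base case applied to $S'$ and the induction hypothesis applied to $C'$ conclude the step.

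There is no serious obstacle; the argument is essentially bookkeeping. The only mildly delicate point is that the equivalence relation on simple cycles allows both cyclic rotation and reversal, which is why part (1) must be used in tandem with the identity $\ph_{T^{-1}}=\ph_T^{-1}$ in order to reduce the reversed case to the direct one.
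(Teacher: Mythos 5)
Your proposal is correct and follows essentially the same route as the paper: reduce to simple cycles via part (1) together with $\ph_{T^{-1}}=\ph_T^{-1}$, then induct on the length of $C$, splitting off the simple cycle at the first repeated vertex and applying the induction hypothesis to the shorter remaining cycle. The only cosmetic difference is that the paper realizes the splice by rotating $C$ so that $C[j-1]=C'S$ is a literal concatenation, whereas you invoke commutativity of $\k^{\times}$ directly, which is equally valid.
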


\begin{proof}
$(1)$ Obvious.

$(2)$ (\impliedby\!). This is trivial.

(\implies\!\!).
Assume that $\ph_{S}=\ps_{S}$ for all $S \in\calS$.
Then obviously we have $\ph_{T}=\ps_{T}$ for all simple cycles $T$.
Let $C=l_n\dots l_{1}$ be a cycle in $Q$.
We show the statement by induction on the length $n$ of $C$.
When $n = 1$, the claim holds because $C$ is a simple cycle.
Assume $n > 1$.
We set $x_i:= s(l_i)$ for all $i = 1,\dots, n$.
Note that there exists an $i$ such that
$\{x_1\}  \not \ni x_2, \{x_1, x_2\} \not\ni x_3, \dots, \{x_1,\dots, x_{i-2}\} \not\ni x_{i-1},
\{x_1,\dots, x_{i-1}\} \ni x_{j} = x_i$.
Then $S:=l_{i-1}\dots l_{j}$ is a simple cycle and we have $\ph_{S}=\ps_{S}$.
We set $C' :=  l_{j-1}\cdots l_2 l_1\cdots l_{i+1}l_i$.
Then $C[j-1] =C' S$.
By induction hypothesis we have $\ph_{C'} = \ps_{C'}$.
Hence
$$\ph_C = \ph_{C[j-1]} =\ph_{C'}\ph_S = \ps_{C'}\ps_S = \ps_{C[j-1]} =\ps_C.$$
\end{proof}

For each $g \in \Aut(\calC)$ 
denote by $g\upbl$ the $\bbZ$-action on $\calC$
defined by $n \mapsto g^n\ (n \in \bbZ)$.
The following plays a central role in the proof of the main result.

\begin{prp}\label{orbitcat-iso}
Let $R$ be a locally bounded category, $Q$ the ordinary quiver of $R$,
and $g, h$ automorphisms
of $R$ that coincide on the objects of $R$.
Consider the following two statements.

\begin{enumerate}
\item
There exists a map $\ro \colon R_0 \to \k^{\times}$ such that
$\ro(y)g(f) = h(f)\ro(x)$ for all morphisms $f\in R(x,y)$ and for all $x,y \in R_0$.
\item
\begin{enumerate}
\item
For each $x, y \in Q_0$ and each $\al \in Q_1(x,y)$,
there exists some $c_{x,y} \in \k^\times$
such that
$(g\inv h)(\ovl{\al}) = c_{x,y}\ovl{\al}$, and
\item
for each cycle $C = l_n\cdots l_1$ $(l_1, \dots, l_n \in \ovl{Q}_1)$
in $Q$, we have 
$(g\inv h)_C = 1$, where $(g\inv h)_C := (g\inv h)_{l_n}\cdots (g\inv h)_{l_1}$, and 
 $(g\inv h)_{\al}:=c_{s(\al),t(\al)}$,
 $(g\inv h)_{\al\inv}:=(c_{s(\al),t(\al)})\inv$ $ (\al\in Q_1)$.
\end{enumerate}
\item
There exists a natural isomorphism $\et\colon h \To g$ such that
 $(\id_R, \et)\colon (R,g\upbl) \to (R,h\upbl)$
 $\bbZ$-equivariant equivalence.
 \item
 There exists a natural isomorphism $\et\colon h \To g$ such that 
 $(\id_R, \et)/\bbZ \colon R/\ang{g} \to R/\ang{h}$ is a strictly degree preserving, homogeneously dense equivalence of $\bbZ$-graded categories.
\end{enumerate}
Then $(1)$ is equivalent to $(2)$, 
$(3)$ is equivalent to $(4)$, and $(1)$ implies $(3)$.
If $R$ has no nonzero oriented cycles, then $(3)$ implies $(1)$.
\end{prp}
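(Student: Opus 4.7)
The plan is to establish the four claimed relations: (1)$\Leftrightarrow$(2), (3)$\Leftrightarrow$(4), (1)$\Rightarrow$(3), and (3)$\Rightarrow$(1) under the no-cycle hypothesis.

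For (1)$\Leftrightarrow$(2), given (1), set $c_{x,y}:=\rho(y)\rho(x)^{-1}$; applying $g^{-1}$ to the defining equation $\rho(y)g(\bar\alpha)=h(\bar\alpha)\rho(x)$ for an arrow $\alpha\in Q_1(x,y)$ produces $(g\inv h)(\bar\alpha)=c_{x,y}\bar\alpha$, which is (2)(a), and for any cycle $C=l_n\cdots l_1$ the product $\prod_i c_{l_i}=\prod_i \rho(\bar t(l_i))\rho(\bar s(l_i))^{-1}$ telescopes to $1$, yielding (2)(b). Conversely, assuming (2), I would work one connected component of $Q$ at a time: fix a base vertex $x_0$, set $\rho(x_0):=1$, and for each other $y$ define $\rho(y)$ as the product of the $c_l$'s along any walk from $x_0$ to $y$; well-definedness is ensured by (2)(b) together with Proposition~\ref{tekito}. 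Equation (1) then holds on each generator $\bar\alpha$ by construction; a multiplicative propagation (using $\rho(z)g(f_2f_1)=\rho(z)g(f_2)g(f_1)=h(f_2)\rho(y)g(f_1)=h(f_2)h(f_1)\rho(x)$ whenever (1) holds for $f_1\colon x\to y$ and $f_2\colon y\to z$) and $\k$-linearity extend it to all morphisms.

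For (1)$\Rightarrow$(3), define $\eta\colon h\To g$ by $\eta_x:=\rho(x)^{-1}\id_{gx}\in R(hx,gx)$ (well-defined since $h=g$ on objects); naturality of $\eta$ at $f\in R(x,y)$ is exactly (1) after moving the scalars $\rho(x),\rho(y)$ across. To promote $\eta$ to the full $\bbZ$-equivariance datum for $(\id_R,\eta)\colon(R,g\upbl)\to(R,h\upbl)$, set $\eta_0:=\id$, $\eta_1:=\eta$, and inductively $\eta_{n+1}:=(\eta g^n)(h\eta_n)$ for $n\geq 0$, with the analogous formula for $n<0$ obtained by taking inverses of whiskered components; each $\eta_n$ is natural by successive whiskering and composition of natural transformations, and the coherence $\eta_{a+b}=(\eta_a g^b)(h^a\eta_b)$ is verified by induction on $|a|+|b|$. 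Since $\id_R$ is trivially fully faithful and dense, Theorem~\ref{eqvar-eq} then shows that $(\id_R,\eta)$ is a $\bbZ$-equivariant equivalence, giving (3). The equivalence (3)$\Leftrightarrow$(4) is a direct application of the orbit-category construction of \cite{Asa} together with Theorem~\ref{orbitcat-eq}, which sends the specific $\bbZ$-equivariant equivalence $(\id_R,\eta)$ to the strictly degree-preserving, homogeneously dense equivalence $(\id_R,\eta)/\bbZ$, and vice versa.

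Finally, for (3)$\Rightarrow$(1) under the no-cycle hypothesis, each component $\eta_x$ lies in $R(hx,gx)=R(gx,gx)\cong\k$, so we may write $\eta_x=\rho(x)^{-1}\id_{gx}$ for a unique $\rho(x)\in\k^\times$; naturality of $\eta$ at an arbitrary $f\in R(x,y)$ collapses to $\rho(y)^{-1}h(f)=\rho(x)^{-1}g(f)$, which rearranges to (1). The main technical delicacy of the whole argument sits in (1)$\Rightarrow$(3): while the explicit formula $(\eta_n)_x=(\prod_{i=0}^{n-1}\rho(g^i x))^{-1}\id_{g^n x}$ is transparent, one must carefully verify both the naturality of each $\eta_n$ and the coherence across all pairs $(a,b)\in\bbZ\times\bbZ$ including mixed signs; all of this, however, reduces by induction on $|a|+|b|$ to the base case $a=b=1$, which is nothing other than the naturality of $\eta$ itself.
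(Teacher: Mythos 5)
Your proposal is correct and follows essentially the same route as the paper: the telescoping/walk-product construction of $\ro$ for $(1)\Leftrightarrow(2)$, the scalar natural isomorphism $\et_{n,x}$ given by products of $\ro(g^ix)$ (you build it recursively, the paper writes the closed formula) for $(1)\Rightarrow(3)$, the orbit $2$-functor and the machinery of \cite{Asa} behind Theorem \ref{orbitcat-eq} for $(3)\Leftrightarrow(4)$, and the observation that $\et_x\in R(g(x),g(x))\iso\k$ under the no-cycle hypothesis for $(3)\Rightarrow(1)$. The only differences are bookkeeping (your components carry the inverse scalars, matching the direction $\et\colon h\To g$ in the statement) and a terser treatment of the $(4)\Rightarrow(3)$ direction, which the paper handles explicitly via the smash-product/counit diagram from \cite[Theorem 7.5]{Asa}.
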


\begin{proof}
We set $E:=g\inv h$.

(1) \implies (2).
By (1) we have $E(f)=\ro(x)\inv\ro(y)f$ for all $f \in R(x,y)$ and for all $x,y \in R_0$.
We set $c_{x,y}:=\ro(x)\inv\ro(y)$ for all  $ x,y \in Q_0$.
Then (2)(a) holds by taking $f=\ovl{\al}$ for all $\al\in Q_1(x,y)$.
To show (2)(b), let 
$$
\xymatrix{
C:x=x_0 \ar@{-}[r]^(0.7){l_1}&
x_1 \ar@{-}[r]^{l_2}&
\cdots \ar@{-}[r]^(0.45){l_n}&
x_n=x
}
$$
 be a cycle in $Q$ for some $l_1,\dots,l_n \in \ovl{Q}_1$.
 Let  $i=1,\dots,n$.
 If $\l_{i} \in Q_1(x_{i-1},x_{i})$, then  $E_{l_i}=\ro(x_{i-1})\inv\ro(x_{i})$.
If $l_i=\al_i\inv$ with $\al_{i} \in Q_1(x_{i},x_{i-1})$,
then $E_{\al_i}=\ro(x_i)\inv\ro(x_{i-1})$ and $E_{l_i}=\ro(x_{i-1})\inv\ro(x_{i})$.
In any case $E_{l_i}=\ro(x_{i-1})\inv\ro(x_{i})$.
Hence 
$$E_C=E_{l_1}\cdots E_{l_n}=\ro(x_0)\inv \ro(x_1)\ro(x_1)\inv\ro(x_2)\cdots \ro(x_{n-1})\inv\ro(x_n)=1.$$

(2) \implies (1).
Let $\{\calX_i \mid i \in I\}$ be the set of all connected components of $Q$.
Choose one vertex $x_i$ in $\calX_i$  for each $i\in I$.
Let $i\in I$. Define $\ro(x_i):=1$.
For each vertex $x$ in $\calX_i$ take a walk $w$ from $x_i$ to $x$ in $\calX_i$ as follows:
$$
\xymatrix{
w: x_i=y_0 \ar@{-}[r]^(0.7){l_1}&
y_1 \ar@{-}[r]^{l_2}&
\cdots \ar@{-}[r]^(0.45){l_m}&
y_m=x.
}
$$
 Then we define $\ro(x):=E_{l_1}\cdots E_{l_m}=E_w$.
  This is well-defined because of (2)(b).
  Indeed, let $w'$ be another walk from $x_i$ to $x$.
  Then $C:={w'}\inv w$ is a cycle through $x$, hence we have 
  $1=E_C=E_{w'}\inv E_w$, which shows that $E_w=E_{w'}$.
Let $f \in R(x,y)$ with $x,y \in R_0$.
We have only to show $E(f)=\ro(x)\inv\ro(y)f$.
We may assume that $f=\ovl\mu $ for some path $\mu=\al_n\cdots\al_1$ in $Q$.
Let $\calX_i$ be the connected component of $Q$ containing $x$.

(i) When $f=\ovl\al$ for some $\al\in Q_1$.
Let $w$ be a walk from $x_i$ to $x$.
Then
$\ro(x)=E_w$ and $\ro(y)=E_{\al w}=E_{\al}E_w$. Thus $c_{x,y}=E_{\al}=\ro(x)\inv\ro(y)$.
Hence $E(\ovl\al)=\ro(x)\inv\ro(y)\ovl\al$.

(ii) Otherwise, we have
$$
\begin{aligned}
E(\ovl\mu)&=E(\ovl\al_n)\cdots E(\ovl\al_1)\\
&=E_{\al_n}\ovl\al_n\cdots E_{\al_1}\ovl\al_1\\
&=E_{\al_1}\cdots E_{\al_n}\ovl\mu \\
&=\ro(z_0)\inv \ro(z_1)\ro(z_1)\inv\ro(z_2)\cdots \ro(z_{n-1})\inv\ro(z_n)\ovl\mu \\
&=\ro(x)\inv\ro(y)\ovl\mu,
\end{aligned}
$$
where
$$
\xymatrix{
\mu:x=z_0 \ar[r]^(0.7){\al_1}&
z_1 \ar@{->}[r]^{\al_2}&
\cdots \ar@{->}[r]^(0.45){\al_n}&
z_n=y.
}
$$

(3) \equivalent (4). This follows by Theorem \ref{orbitcat-eq}.
More precisely, (3) implies (4) because 
$$(\blank)/\bbZ\colon\ZCat\to\ZGrCat$$
is a 2-functor, and (4) implies (3) by the following strictly commutative diagram:
$$
\xymatrix@C=60pt{
(R/\ang{g})\#\bbZ & (R/\ang{h})\#\bbZ\\
(R, g\upbl) & (R, h\upbl)
\ar"1,1";"1,2"^{((\id_R,\et)/\bbZ)\#\bbZ}
\ar"2,1";"2,2"_{(\id_R,\et)}
\ar"2,1";"1,1"^{\ep_{(R,g\upbl)}}
\ar"2,2";"1,2"_{\ep_{(R,h\upbl)}}
}
$$
and the fact that  $\ep'_{(R,h\upbl)}\ep_{(R,h\upbl)}=\id_{(R,h\upbl)}$
(see \cite[Theorem 7.5]{Asa} ).

(1) \implies (3).
Assume that the statement (1) holds.
Then by applying (1) to $g\inv(f) \colon g\inv(x) \to g\inv(y)$
we have 
\begin{equation} \label{eq:inv}
\ro(g\inv(y))\inv g\inv(f) = h\inv(f)\ro(g\inv(x))\inv.
\end{equation} 
Now we have to construct a $\bbZ$-equivariant equivalence
$(\id_R,\et)\colon (R,g\upbl) \to (R,h\upbl)$.
The statement (1) yields the following commutative diagram:  
\begin{equation} \label{eq:def-eta}
{\tiny\vcenter{\xymatrix@C=55pt{
g^n(x)&g^{n-1}h(x)&\cdots&gh^{n-1}(x)& h^n(x) \\
g^n(y)&g^{n-1}h(y)&\cdots&gh^{n-1}(y)& h^n(y)
\ar"1,1";"1,2"^{\ro(x)\id}
\ar"2,1";"2,2"_{\ro(y)\id}
\ar"1,1";"2,1"_{g^n(f)}
\ar"1,2";"2,2"^{g^{n-1} h(f)}
\ar"1,2";"1,3"^{\ro(g(x))\id}
\ar"2,2";"2,3"_{\ro(g(y))\id}
\ar"1,3";"1,4"^{\ro(g^{n-2}(x))\id}
\ar"2,3";"2,4"_{\ro(g^{n-2}(y))\id}
\ar"1,4";"2,4"_{gh^{n-1}(f)}
\ar"1,4";"1,5"^{\ro(g^{n-1}(x))\id}
\ar"2,4";"2,5"_{\ro(g^{n-1}(y))\id}
\ar"1,5";"2,5"^{h^n(f)}
}}}
\end{equation}
for all $n > 0$, and

\begin{equation} \label{eq:def-eta2}
{\tiny\vcenter{\xymatrix@C=52pt{
g^n(x)&g^{n+1}h\inv(x)&\cdots&g\inv h^{n+1}(x)& h^n(x) \\
g^n(y)&g^{n+1}h\inv(y)&\cdots&g\inv h^{n+1}(y)& h^n(y)
\ar"1,1";"1,2"^{\ro(g\inv(x))\inv\id}
\ar"2,1";"2,2"_{\ro(g\inv(y))\inv\id}
\ar"1,1";"2,1"_{g^n(f)}
\ar"1,2";"2,2"^{g^{n+1} h\inv(f)}
\ar"1,2";"1,3"^{\ro(g^{-2}(x))\inv\id}
\ar"2,2";"2,3"_{\ro(g^{-2}(y))\inv\id}
\ar"1,3";"1,4"^{\ro(g^{n+1}(x))\inv\id}
\ar"2,3";"2,4"_{\ro(g^{n+1}(y))\inv\id}
\ar"1,4";"2,4"_{g\inv h^{n+1}(f)}
\ar"1,4";"1,5"^{\ro(g^{n}(x))\inv\id}
\ar"2,4";"2,5"_{\ro(g^{n}(y))\inv\id}
\ar"1,5";"2,5"^{h^n(f)}
}}}
\end{equation}
for all $n < 0$.

By looking at \eqref{eq:inv}, \eqref{eq:def-eta} and \eqref{eq:def-eta2}
we define a family $\et =(\et _n)_{n \in \bbZ}$ of natural isomorphisms $\et _n:g^n \To  h^n$ by
\[ \et _{n,x}:=\begin{cases}
            \ro (x)\ro (g(x))\cdots \ro (g^{n-1}(x))\id _{g^n(x)} & \text{if } n>0 \\
            \id _x & \text{if } n=0\\
            \ro (g^n(x))^{-1}\ro (g^{n+1}(x))^{-1}\cdots \ro (g^{-1}(x))^{-1}\id _{g^n(x)} & \text{if } n<0
            \end{cases} \]
for all $n\in \bbZ$ and $x\in R_0$.
To verify that $(\id_R,\et)$ is a $\bbZ$-equivariant equivalence,
it is enough to show
\begin{equation}\label{eq:equivar}
\et _{m+n,x}=\et_{m,h^n(x)}g^m(\et _{n,x})\ (=\et_{m,g^n(x)}g^m(\et _{n,x}))
\end{equation}
for all $m,n\in \bbZ$ and $x\in R_0$. 
We may assume that $n\neq 0$ because \eqref{eq:equivar} is trivial for $n=0$. 
The commutative diagrams \eqref{eq:def-eta} and \eqref{eq:def-eta2} show that
\begin{align}
\label{eq:positive}
\et _{n,x}
&
=\et_{n-1,g(x)}g^{n-1}(\et _{1,x}) &(n > 0),\\
\label{eq:negative}
\et _{n,x}
&
 =\et_{n+1,g^{-1}(x)}g^{n+1}(\et _{-1,x}) &(n < 0).
\end{align}
Then for each $n > 0$ 
we use \eqref{eq:positive} to show \eqref{eq:equivar} as follows:
\begin{eqnarray*}
\et _{m+n,x}&=&\et_{m+n-1,g(x)}g^{m+n-1}(\et _{1,x})\\
                 &=&\et_{m+n-2,g^2(x)}g^{m+n-2}(\et _{1,g(x)})g^{m+n-1}(\et _{1,x})\\
                 &=&\et_{m+n-2,g^2(x)}g^{m+n-2}(\et _{1,g(x)}g(\et _{1,x}))\\
                 &=&\et_{m+n-2,g^2(x)}g^{m+n-2}(\et _{2,x})\\
                 &\vdots & \\
                 &=&\et_{m,g^n(x)}g^m(\et _{n,x}).
\end{eqnarray*}
Similarly for each $n <0$ the equality \eqref{eq:equivar} follows by using
\eqref{eq:negative}.

Hence by Theorem \ref{orbitcat-eq}, 
we have an equivalence $(\id_R, \et)/\bbZ \colon R/\ang{g} \to R/\ang{h}$ as $\bbZ$-graded categories that is the identity on the objects,  
thus $(\id_R, \et)/\bbZ $ is an isomorphism.

(3) \implies (1).
Assume that $R$ has no nonzero oriented cycles.
By (3), $\et\inv:g\To h$ is a natural isomorphism.
Then for each $f:x\to y$ in $R$ we have a commutative diagram

$$
\xymatrix{
g(x) & h(x) \\
g(y) & h(y).
\ar "1,1";"1,2"^{\et_x\inv}
 \ar "2,1";"2,2"_{\et_y\inv}
 \ar "1,1";"2,1"_{g(f)}
 \ar "1,2";"2,2"^{h(f)}
}
$$
By the assumption, for each $z \in R_0$, there exists some $\ro(z) \in \k^\times$ 
such that $\et_z\inv=\ro(z)\id_{g(z)}$.
Hence $\ro(y)g(f)=h(f)\ro(x)$.
\end{proof}
 
\begin{cor}
Let $R$ be a locally bounded category having no nonzero oriented cycles, 
$Q$ the ordinary quiver of $R$,
and $g, h, E$ automorphisms of $R$. 
Then the following are equivalent.

\begin{enumerate}
\item
\begin{enumerate}
\item
$Eg$ and $hE$ coincide on the objects of $R$.
\item
There exists a map $\ro \colon R_0 \to \k^{\times}$ such that
$\ro(y)(Eg)(f) = (hE)(f)\ro(x)$ for all morphisms $f\in R(x,y)$ and for all $x,y \in R_0$.
\end{enumerate}
\item
\begin{enumerate}
\item
$Eg$ and $hE$ coincide on the objects of $R$.	
\item
For each $x, y \in Q_0$ and each $\al \in Q_1(x,y)$,
there exists some $c_{x,y} \in \k^\times$
such that
$((Eg)\inv hE)(\ovl{\al}) = c_{x,y}\ovl{\al}$, and
\item
for each cycle $C = l_n\cdots l_1$ $(l_1, \dots, l_n \in \ovl{Q}_1)$
in $Q$, we have $((Eg)\inv hE)_C = 1$.
\end{enumerate}
\item
There exists  a natural isomorphism $\et  \colon hE \To Eg$ such that $(E, \et)\colon (R,g\upbl) \to (R,h\upbl)$ is a $\bbZ$-equivariant equivalence.
\item
There exists  a natural isomorphism $\et  \colon hE \To Eg$ such that 
$(E, \et)/\bbZ \colon R/\ang{g} \to R/\ang{h}$ is a strictly degree preserving, homogeneously dense equivalence of $\bbZ$-graded categories.
\end{enumerate}
\end{cor}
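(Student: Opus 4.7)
The plan is to reduce the corollary to Proposition~\ref{orbitcat-iso} applied to the pair $(g, E\inv h E)$ of automorphisms of $R$. The starting point is that $g$ and $E\inv h E$ agree on the objects of $R$ precisely when $Eg$ and $hE$ do, so the object-agreement clauses (1)(a) and (2)(a) of the corollary coincide with the object-agreement hypothesis needed to invoke Proposition~\ref{orbitcat-iso} for the pair $(g, E\inv h E)$.

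Under that hypothesis I would translate the remaining conditions one by one. For (1)(b), left-multiplying the relation $\ro(y)g(f) = (E\inv h E)(f)\ro(x)$ by $E$ yields $\ro(y)(Eg)(f) = (hE)(f)\ro(x)$, and the step is reversible; thus (1)(b) of the corollary is equivalent to condition~(1) of Proposition~\ref{orbitcat-iso} for $(g, E\inv hE)$, with the same function $\ro$. For (2)(b) and (2)(c), one observes simply that $g\inv(E\inv hE) = (Eg)\inv hE$ as automorphisms of $R$, so the scalars and the cocycle condition demanded in the corollary coincide verbatim with those demanded by condition~(2) of Proposition~\ref{orbitcat-iso} for $(g, E\inv hE)$.

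For (3) and (4), the key observation is that $E\colon R \to R$ defines a strictly $\bbZ$-equivariant isomorphism $(E, \id)\colon (R, (E\inv hE)\upbl) \to (R, h\upbl)$, since $E\circ(E\inv hE)^n = h^n\circ E$ holds on the nose for every $n$. Composing with $(E, \id)$ via Lemma~\ref{lem:eqv-eqv-is-eqv} therefore induces a bijection between $\bbZ$-equivariant equivalences $(\id_R, \et'')\colon (R, g\upbl) \to (R, (E\inv h E)\upbl)$ and $\bbZ$-equivariant equivalences $(E, \et)\colon (R, g\upbl) \to (R, h\upbl)$, via the formula $\et_n = E\et''_n$; at $n=1$ this produces the desired natural isomorphism $hE \To Eg$. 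Applying the $2$-functor $(\blank)/\bbZ$ yields the analogous bijection at the level of orbit categories, relating (4) of the corollary to condition~(4) of Proposition~\ref{orbitcat-iso} for $(g, E\inv h E)$.

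Combining these translations with Proposition~\ref{orbitcat-iso} applied to $(g, E\inv h E)$ delivers all the required equivalences; the hypothesis that $R$ has no nonzero oriented cycles is used at exactly the point where Proposition~\ref{orbitcat-iso} needs it, namely for the direction (3)$\Rightarrow$(1). The only delicate step is the bookkeeping for the composition of equivariant functors: one must verify that $(E,\id)\circ(\id_R,\et'')$ equals $(E, (E\et''_n)_n)$ using the composition formula in Lemma~\ref{lem:eqv-eqv-is-eqv}, and that the induced correspondence on equivalences is a bijection, which holds because $(E,\id)$ is in fact an isomorphism in $\bbZ$-$\mathbf{Cat}$. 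Everything else is direct substitution.
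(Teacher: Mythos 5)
Your argument is correct, and it reaches the corollary by a slightly different reduction than the paper's. The paper invokes Proposition \ref{orbitcat-iso} for the pair $(Eg,hE)$: conditions (1) and (2) of the corollary are then literally conditions (1) and (2) of the proposition, the equivalence with (3) is obtained by re-reading the square with horizontal arrows $E$ and vertical arrows $g,h$ as the square with horizontal arrows $\id_R$ and vertical arrows $Eg,hE$, and (3)$\Leftrightarrow$(4) is obtained by repeating the proposition's argument with $E$ in place of $\id_R$. You instead apply the proposition to the conjugated pair $(g,E\inv hE)$ and transport conditions (3) and (4) along the strictly equivariant isomorphism $(E,\id)\colon (R,(E\inv hE)\upbl)\to(R,h\upbl)$ (using $E(E\inv hE)^n=h^nE$) and its image under $(\blank)/\bbZ$; your translation of (1)(b) by applying $E$ and $E\inv$ to the defining relation, and the identity $g\inv(E\inv hE)=(Eg)\inv hE$ for (2)(b),(c), are exactly right, and the hypothesis of no nonzero oriented cycles enters only where the proposition needs it. What your route buys is explicit bookkeeping of the equivariance data (the bijection $\et_n=E\et''_n$ via Lemma \ref{lem:eqv-eqv-is-eqv}, with $(E,\id)$ invertible in $\ZCat$), which makes the passage between $(E,\et)$ and $(\id_R,\et'')$ completely formal, whereas the paper's ``regard one square as the other'' is terser but leaves that verification implicit. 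One small point, shared with the paper's own write-up and worth a sentence if you polish this: when starting from (3) or (4) you must also recover the object-coincidence hypothesis of Proposition \ref{orbitcat-iso}; this is automatic because a natural isomorphism $hE\To Eg$ forces $hE$ and $Eg$ (hence $g$ and $E\inv hE$) to agree on objects, distinct objects of a locally bounded category being non-isomorphic.
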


\begin{proof}
(1) \equivalent (2). This immediately follows from Proposition \ref{orbitcat-iso} if we replace $g, h$ by $Eg, hE$, respectivity.

(1) \equivalent (3). This follows from Proposition \ref{orbitcat-iso} by regarding the diagram
 in the left hand side as that in the right:
$$
\xymatrix{
R & R\\
R & R,
\ar"1,1";"1,2"^{E}
\ar"2,1";"2,2"_{E}
\ar"1,1";"2,1"_{g}
\ar"1,2";"2,2"^{h}
\ar@{=>}"1,2";"2,1"_\et
}\quad
\xymatrix{
R & R\\
R & R
\ar"1,1";"1,2"^{\id_R}
\ar"2,1";"2,2"_{\id_R}
\ar"1,1";"2,1"_{Eg}
\ar"1,2";"2,2"^{hE}
\ar@{=>}"1,2";"2,1"_\et
}
$$
(3) \equivalent (4). This follows from the equivalence of $(3)$ and $(4)$ in Proposition \ref{orbitcat-iso}
by replacing $\id_R$ to $E$.
\end{proof}

\begin{rmk} \label{rmk:simple-cycle}
(1) Proposition \ref{orbitcat-iso} does not assume that the $G$-actions are free.
Therefore we extend the derived equivalence classification given in \cite{AK} to
that of all $n$-fold extensions including the case that $n=0$.

(2) Let $\calS$ be a complete set  of representatives of equivalence classes of simple cycles in $Q$.
To verify the condition (2)(b) in Proposition \ref{orbitcat-iso} 
it is enough to show that $(g\inv h)_C = 1$ for each cycle $C \in \calS$.
\end{rmk}

\section{Main results}
Throughout this section we assume that $A$ is an algebra without nonzero oriented cycles unless otherwise stated.

\begin{thm}\label{gme-iso}
Let $\ph$ and $\ps$ be automorphisms of $\hat{A}$ with jump $n \in \bbZ$ that coincide on the objects of $\hat{A}$.
Any map $\ro _0\colon A_0 \to \k^{\times}$ such that
\begin{equation}\label{eq:level0}
\ro _0(y)\ph _0(a)=\ps _0(a)\ro _0(x)\ (a\in A(x,y),\ x,y\in A_0)
 \end{equation}
 can be extended to a map $\ro:\hat{A}_0\to \k ^\times$ such that
 \begin{equation}\label{eq:whole-hatA}
 \ro (v)\ph (f)=\ps(f)\ro (u)\quad (f\in \hat{A}(u,v),\ u,v\in\hat{A}_0),
 \end{equation}
 where  we regard the functor $\id^{[0]}:A\to A^{[0]}$ as the inclusion $A\incl \hat{A}$.
 
Hence if there exists a map $\ro_0$ with \eqref{eq:level0}, 
then by Proposition \ref{orbitcat-iso}, $(\id_{\hat{A}}, \et )/\bbZ \colon \hat{A}/\ang{\ph} \to \hat{A}/\ang{\ps}$ is an equivalence in $\ZGrCat$ for some natural isomorphism $\et\colon\ps\To\ph$, in particular, it is an isomorphism as $\bbZ$-graded categories when $n\ne 0$.
\end{thm}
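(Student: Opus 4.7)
The plan is to construct the extension $\ro$ of $\ro_0$ by induction on the level $i \in \bbZ$ and then to deduce the orbit-category isomorphism from Proposition \ref{orbitcat-iso}.

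First I would observe that, because $\hat{A}$ has no nonzero oriented cycles by Remark \ref{rmk:rpt-noc}(2) and $\ph,\ps$ coincide on objects, we have $\hat{A}(\ph(u),\ps(u)) = \hat{A}(\ph(u),\ph(u)) = \k\cdot\id_{\ph(u)}$; hence any candidate $\ro$ is determined by scalars $r_i(x)\in \k^\times$ (with $i\in\bbZ$, $x\in A_0$) via $\ro(x^{[i]}) = r_i(x)\,\id_{\ph(x^{[i]})}$, and the initial datum is $r_0 := \ro_0$. Since $\hat{A}$ is $\k$-generated as a category by the morphisms $a^{[i]}$ ($a\in A(x,y)$) and $\be^{[i]}$ ($\be\in DA(y,x)$), the naturality equation \eqref{eq:whole-hatA} reduces to two families of scalar conditions: a level-$i$ condition $(\mathrm{I}_i)\colon r_i(y)\ph_i(a) = \ps_i(a)\,r_i(x)$ for $a\in A(x,y)$, and a bridge condition $(\mathrm{II}_i)\colon r_{i+1}(y)\,\ph(\be^{[i]}) = \ps(\be^{[i]})\,r_i(x)$ for $\be\in DA(y,x)$. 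Note that $(\mathrm{I}_0)$ is precisely the given hypothesis.

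For the inductive step, I would apply Remark \ref{jump-rmk}(1) to both $\ph$ and $\ps$: writing $\ph = \hat{\si}_\ph \circ \Ph(\la_\ph) \circ \nu_A^n$ and $\ps = \hat{\si}_\ps \circ \Ph(\la_\ps) \circ \nu_A^n$ for suitable $\si_\ph, \si_\ps \in \Aut(A)$ and $\la_\ph, \la_\ps \in (\k^\times)^{\hat{A}_0}$, equation \eqref{eq:ind-step} (with an appropriate index shift coming from the $\nu_A^n$ factor) gives
\[
\ph_{i+1}(a) = \tfrac{\la_{\ph,i+n}(x)}{\la_{\ph,i+n}(y)}\ph_i(a),\qquad
\ps_{i+1}(a) = \tfrac{\la_{\ps,i+n}(x)}{\la_{\ps,i+n}(y)}\ps_i(a)
\]
for every $a\in A(x,y)$. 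Plugging these into $(\mathrm{I}_{i+1})$ and comparing with $(\mathrm{I}_i)$, one checks that the choice
\[
r_{i+1}(x) := \la_{\ph,i+n}(x)\,\la_{\ps,i+n}(x)^{-1}\,r_i(x),
\]
run recursively forward for $i \ge 0$ and backward for $i < 0$, produces scalars for which every $(\mathrm{I}_i)$ holds automatically. It then remains to verify $(\mathrm{II}_i)$. This is a direct calculation: by Definition \ref{dfn:repet-cat} and Lemma \ref{repet-auto} the action of $\ph$ on $\be^{[i]}$ is given by the $\k$-dual of $\bar{\ph}_i^{-1}$, and the scalars prescribing $\bar{\ph}_i$ and $\bar{\ps}_i$ are in turn read off from $(\si_\ph,\la_\ph)$ and $(\si_\ps,\la_\ps)$ via Lemma \ref{constr-auto}; the condition $(\mathrm{II}_i)$ then collapses to the same ratio $\la_{\ph,i+n}/\la_{\ps,i+n}$ already built into the recursion for the $r_i$.

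Once $\ro$ is in hand, the final clause follows from Proposition \ref{orbitcat-iso} applied with $R = \hat{A}$, $g = \ph$, $h = \ps$: condition~(1) of that proposition holds by construction, so conditions (3) and (4) do as well, yielding a natural isomorphism $\et\colon \ps \To \ph$ and the strictly degree-preserving, homogeneously dense equivalence $(\id_{\hat{A}},\et)/\bbZ\colon \hat{A}/\ang{\ph} \to \hat{A}/\ang{\ps}$ in $\ZGrCat$. When $n \neq 0$ the orbit categories have only finitely many objects, so this equivalence is automatically an isomorphism of $\bbZ$-graded categories. The main obstacle I anticipate is the bridge condition $(\mathrm{II}_i)$: while $(\mathrm{I}_i)$ concerns automorphisms of $A$ and is controlled cleanly by Remark \ref{jump-rmk}(1), the action of $\ph$ on $\be^{[i]}$ involves the contravariant $\k$-dual of the bimodule component $\bar{\ph}_i$, so one must track carefully how the scalars coming from $\la_\ph$ transform after dualizing. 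No new idea should be required beyond Remark \ref{jump-rmk}(1) and the explicit bimodule description of $\hat{A}$, but the bookkeeping must be executed cleanly to see the cancellation making $(\mathrm{II}_i)$ a consequence of the recursion defining the $r_i$.
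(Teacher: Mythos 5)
Your proposal is correct and follows essentially the same route as the paper: both rest on the Section 2 structure theory (Proposition \ref{rigid-exact}, Remark \ref{jump-rmk}(1), equations \eqref{eq:ind-step} and \eqref{eq:phibar}) to express the level-to-level change of the components $\ph_i,\ps_i$ and $\bph_i,\bps_i$ by the scalar sequences $\la$, then build $\ro$ level by level from the ratio of these scalars (your recursion for $r_{i+1}$ is just the paper's closed formula $\ro(x^{[i]})=(\la_0\cdots\la_{i-1}(x))^{-1}\ro_0(x)$ unwound), and the cancellation you anticipate in $(\mathrm{II}_i)$ does hold: given $(\mathrm{I}_i)$ and $\bph_i(b)=\la_{\ph,i}(s(b))\ph_i(b)$, $\bps_i(b)=\la_{\ps,i}(s(b))\ps_i(b)$, the bridge condition reduces precisely to your recursion $r_{i+1}(z)=\la_{\ph,\cdot}(z)\la_{\ps,\cdot}(z)^{-1}r_i(z)$, after which Proposition \ref{orbitcat-iso} finishes the argument exactly as in the paper. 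One small repair: for $n\neq 0$ the equivalence is an isomorphism of $\bbZ$-graded categories not because the orbit categories are finite, but because $(\id_{\hat{A}},\et)/\bbZ$ is the identity (in particular bijective) on objects, which together with full faithfulness gives an isomorphism.
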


\begin{proof}
Let $\ro _0\colon A_0 \to \k^{\times}$ be a map with the property \eqref{eq:level0}.
Then we construct an extension $\ro:\hat{A}_0\to \k ^\times$ of $\ro_0$ 
with the property \eqref{eq:whole-hatA}, namely
\begin{align}
\label{eq:same-level}
\ro (y^{[i]})\ph (a^{[i]})&=\ps(a^{[i]})\ro (x^{[i]})\quad (a\in A(x,y))\\
\label{eq:diff-level}
\ro (y^{[i+1]})\ph (\be^{[i]})&=\ps(\be^{[i]})\ro (x^{[i]})\quad (\be\in D(A(y,x)))
\end{align}
for all $x, y \in A_0$ and $i \in \bbZ$.
It follows from \eqref{eq:level0} that
$\ph _0(a) = \ro _0(y)\inv\ps _0(a)\ro _0(x)
= \ps _0\,\xi(\ro _0)(a)$ for all $a \in A(x,y)$ and $x, y \in A_0$
(see Definition \ref{dfn:xi} for $\xi$), and hence
$\ph _0= \ps _0\,\xi(\ro _0)$.
Here we regard $\ro _0$ as the sequence
$(\ro _0(x))_{x\in A_0}\in (\k ^\times)^{A_0}$.
Then we have
$(\ph ^{ -1}\ps\, \widehat{\xi(\ro _0)}\,)_0
=\ph_0 ^{ -1}\ps_0\, (\widehat{\xi(\ro _0)})_0
= \ph_0 ^{ -1}\ps_0\, \xi(\ro _0)
=\id _A$
(see Lemma \ref{repet-auto} for the definition of
$\widehat{\xi(\ro _0)} \in \Aut(\hat{A})$),
and hence there exists an element $\la = (\la_i)_{i\in \bbZ}\in (\k^\times)^{\hat{A}_0}$ such that
$\ph ^{-1}\ps\, \widehat{\xi(\ro _0)}=\Ph (\la)$,
from which we have
\begin{equation}\label{eq:ph-ps}
\ps = \ph\Ph(\la)(\widehat{\xi(\ro _0)})^{-1}.
\end{equation}
We set $\ph =(\ph_i,\bph_i)_{i\in\bbZ}$,
$\ps =(\ps_i,\bps_i)_{i\in\bbZ}$ and
$\Phi(\la)=(\si_i,\bar{\si}_i)_{i\in\bbZ}$.
Further $\widehat{\xi(\ro_0)}=(\xi(\ro_0),\xi(\ro_0))_{i\in\bbZ}$.
By comparing the first entries of the equality \eqref{eq:ph-ps} we see that
$\ps_i = \ph_i\si_i\xi(\ro _0)^{-1}$ for all $i \in \bbZ$.
For each $ a \in A(x,y)$ ($x,y \in A$)
by using equalities \eqref{eq:def-xi} and \eqref{eq:phi}
we have 
$$
\ps_i(a)=\ph_i(a)\la_0(x)\dots\la_{i-1}(x)
\la_0(y)^{-1}\dots\la_{i-1}(y)^{-1}\ro_0(x)^{-1}\ro_0(y)
$$
and
$$
\ps_i(a)\la_0(x)\inv\dots\la_{i-1}(x)\inv\ro_0(x)
=\ph_i(a)\la_0(y)^{-1}\dots\la_{i-1}(y)^{-1}\ro_0(y)
$$
if $i >0$.
By looking at this
 we define $\ro:\hat{A}_0\to \k ^\times$ by the formula
\[ \ro (x^{[i]}):=\begin{cases}
            (\la _0\cdots \la _{i-1}(x))^{-1}\ro _0(x) & \text{if } i>0\\
            \ro _0(x) & \text{if } i=0\\
            \la _i\cdots \la _{-1}(x)\ro _0(x) & \text{if } i<0
            \end{cases} \]
for all $x^{[i]}\in \hat{A}_0$.
Then $\ro$ is certainly an extension of $\ro_0$.
Now let $a\in A(x,y)\  (x, y \in A_0)$ and $i \in \bbZ$.
Note that we have $\ph(a^{[i]})=\ph_i(a)$ by definition of $\ph_i$.
Then by definition of $\ro$ we have
$\ps(a^{[i]})\ro (x^{[i]})=\ro (y^{[i]})\ph (a^{[i]})$,
thus \eqref{eq:same-level} holds.

We next show the equality \eqref{eq:diff-level}.
By comparing the second entries of the equality \eqref{eq:ph-ps} 
we have $\bps_i = \bph_i\bar{\si}_i\xi(\ro _0)^{-1}$.
By using equalities \eqref{eq:def-xi}, \eqref{eq:phi} and \eqref{eq:phibar}
we see that $\bph_i(a)\ro(x^{[i]})=\ro(y^{[i+1]})\bps_i(a)$ for all $a\in A(y,x)\  (x, y \in A_0)$.
we put 
$$\bph_{i;y,x}:=\bph_{i}|_{A(y,x)}\colon A(y,x)\to A(\ph_i(y),\ph_{i+1}(x))$$
and 
$$\bps_{i;y,x}:=\bps_{i}|_{A(y,x)}\colon A(y,x)\to A(\ps_i(y),\ps_{i+1}(x)).$$
Then we have $\bph_{i;y,x}\ro(x^{[i]})=\ro(y^{[i+1]})\bps_{i;y,x}$ for all $x, y \in A_0$.
Hence 
$$\ro(y^{[i+1]})\bph_{i;y,x}\inv=\bps_{i;y,x}\inv\ro(x^{[i]}),$$
and for each $\be \in D(A(y,x))$ we have
$$\be\ro(y^{[i+1]})\bph_{i;y,x}\inv=\be\bps_{i;y,x}\inv\ro(x^{[i]}),$$
thus 
$$\ro(y^{[i+1]})D(\bph_{i;y,x}\inv)(\be)=D(\bps_{i;y,x}\inv)(\be)\ro(x^{[i]})$$ and
$$\et_i\ro(y^{[i+1]})D(\bph_{i;y,x}\inv)(\be)=\et_i D(\bps_{i;y,x}\inv)(\be)\ro(x^{[i]}).$$
By the commutativity of the diagram \eqref{eq:dfn-phi-2} we have
$$\ro(y^{[i+1]})\ph(\be^{[i]})= \ps(\be^{[i]})\ro(x^{[i]}),$$
as required.
\end{proof}

The assumption of the statement above can be slightly weakened as follows.

\begin{prp}\label{iso}
Let $\ph $ and $\ps $ be automorphisms of $\hat{A}$ with jump $n \in \bbZ$ that coincide on the objects of $\hat{A}$.
We may set $\ph\nu_A^{-n}= (\ph_i,\bph_i)_{i\in\bbZ}$ and $\ps\nu_A^{-n}= (\ps_i,\bps_i)_{i\in\bbZ}$ because they are automorphisms of $\hat{A}$ with jump $0$. 
If there exist $i,j\in \bbZ$ and $\ro :A_0\to \k ^\times$ such that
\begin{equation} \label{eq:comm-ij}
\ro(y)\ph _i(a)=\ps _j(a)\ro(x)
\end{equation}
 for all $a\in A(x,y)$ $x, y \in A_0$, 
 then 
 the equality \eqref{eq:level0} holds.
\end{prp}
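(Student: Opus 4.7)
The plan is to descend the assumed equality from indices $(i,j)$ to index $0$ by invoking the telescoping recursion of Remark \ref{jump-rmk}(1). Concretely, since $\ph$ and $\ps$ both have jump $n$, Proposition \ref{jump} tells us that $\ph\nu_A^{-n}$ and $\ps\nu_A^{-n}$ both lie in $\autz$, so Remark \ref{jump-rmk}(1) — in particular the formula \eqref{eq:ind-step} — applies to each. I would use this to produce sequences $\la,\mu\in(\k^\times)^{\hat{A}_0}$ satisfying
\[
\ph_{k+1}(a) = \la_k(x)\,\la_k(y)\inv\,\ph_k(a),\qquad \ps_{k+1}(a) = \mu_k(x)\,\mu_k(y)\inv\,\ps_k(a)
\]
for every $a\in A(x,y)$ and every $k\in\bbZ$.

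Next, by iterating this recursion from index $0$ to the prescribed $i$ (respectively $j$), I would produce two scalar-valued maps $f,g\colon A_0\to \k^\times$ — finite products of entries of $\la$ and of $\mu$ respectively — such that
\[
\ph_i(a) = f(x)\,f(y)\inv\,\ph_0(a) \quad\text{and}\quad \ps_j(a) = g(x)\,g(y)\inv\,\ps_0(a)
\]
for all $a\in A(x,y)$. This step is just an unfolding of the recursion; the signs of $i$ and $j$ only invert the exponents in the products and do not affect the final form of the identity.

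Substituting these two formulas into the hypothesis $\ro(y)\ph_i(a)=\ps_j(a)\ro(x)$ and freely moving the $\k^\times$-scalars past the morphisms, the equality rearranges to
\[
\bigl(f(y)\inv g(y)\ro(y)\bigr)\,\ph_0(a) \;=\; \ps_0(a)\,\bigl(f(x)\inv g(x)\ro(x)\bigr).
\]
Hence setting $\ro_0(z):=f(z)\inv g(z)\ro(z)$ produces a map $A_0\to\k^\times$ realising \eqref{eq:level0}. Because $\ph$ and $\ps$ agree on objects, so do $\ph\nu_A^{-n}$ and $\ps\nu_A^{-n}$, which guarantees that $\ph_0(a)$ and $\ps_0(a)$ lie in the common hom-space $A(\ph_0(x),\ph_0(y))$ so that these scalar manipulations are unambiguous.

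The only genuine difficulty is bookkeeping: handling the cases $i>0$, $i=0$, $i<0$ (and similarly for $j$) uniformly when iterating \eqref{eq:ind-step}, and keeping track of where the entries of $\la$ and $\mu$ end up in the products defining $f$ and $g$. Conceptually, however, the argument is a one-line telescoping once Remark \ref{jump-rmk}(1) is in hand, and no further input is required.
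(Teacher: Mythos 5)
Your argument is correct and is essentially the paper's own proof: both use the splitting of Remark \ref{jump-rmk}(1) (via \eqref{eq:ind-step}) to telescope $\ph_i$ and $\ps_j$ down to scalar twists $f,g$ of $\ph_0$ and $\ps_0$, and then set $\ro_0(z)=f(z)\inv g(z)\ro(z)$, which is exactly the paper's $\ro_0(z)=(\bar{\la}_i(z))\inv\bar{\mu}_j(z)\ro(z)$. The case analysis on the signs of $i$ and $j$ that you defer is handled in the paper in the same routine way.
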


\begin{proof}
Since $ \ph\nu_A^{-n}$ and $ \ps\nu_A^{-n}$ are automorphism of $\hat{A}$ with jump 0, 
there exist $(\la,\si), (\mu,\ta)\in (\k^\times)^{\hat{A}_0} \times \Aut (A)$ such that
$$
 \ph\nu_A^{-n} = \hat{\si}\circ \Ph(\la) \text{ and }  \ps\nu_A^{-n} = \hat{\ta}\circ \Ph(\mu).
$$
Then by the equality \eqref{eq:ind-step} in  Remark \ref{jump-rmk} (1) we have  the following commutative diagrams:
$$
\xymatrix@C=45pt{
\ph_i(x)&\ph_{i-1}(x)&\cdots&\ph_1(x)& \ph_0(x) \\
\ph_i(y)&\ph_{i-1}(y)&\cdots&\ph_1(y)&\ph_0(y)
\ar"1,1";"1,2"^{\la_{i-1}(x)\id_x}
\ar"2,1";"2,2"_{\la_{i-1}(y)\id_y}
\ar"1,1";"2,1"_{\ph_{i}(a)}
\ar"1,2";"2,2"^{\ph_{i-1}(a)}
\ar"1,2";"1,3"^{\la_{i-2}(x)\id_x}
\ar"2,2";"2,3"_{\la_{i-2}(y)\id_y}
\ar"1,3";"1,4"^{\la_1(x)\id_x}
\ar"2,3";"2,4"_{\la_1(y)\id_y}
\ar"1,4";"2,4"_{\ph_1(a)}
\ar"1,4";"1,5"^{\la_0(x)\id_x}
\ar"2,4";"2,5"_{\la_0(y)\id_y}
\ar"1,5";"2,5"_{\ph_0(a)}
}
$$
if $i>0$, and
$$
\xymatrix@C=45pt{
\ph_i(x)&\ph_{i+1}(x)&\cdots&\ph_{-1}(x)& \ph_0(x) \\
\ph_i(y)&\ph_{i+1}(y)&\cdots&\ph_{-1}(y)&\ph_0(y) ,
\ar"1,1";"1,2"^{\la_{i}(x)\inv\id_x}
\ar"2,1";"2,2"_{\la_{i}(y)\inv\id_y}
\ar"1,1";"2,1"_{\ph_{i}(a)}
\ar"1,2";"2,2"^{\ph_{i+1}(a)}
\ar"1,2";"1,3"^{\la_{i+1}(x)\inv\id_x}
\ar"2,2";"2,3"_{\la_{i+1}(y)\inv\id_y}
\ar"1,3";"1,4"^{\la_{-2}(x)\inv\id_x}
\ar"2,3";"2,4"_{\la_{-2}(y)\inv\id_y}
\ar"1,4";"2,4"_{\ph_{-1}(a)}
\ar"1,4";"1,5"^{\la_{-1}(x)\inv\id_x}
\ar"2,4";"2,5"_{\la_{-1}(y)\inv\id_y}
\ar"1,5";"2,5"_{\ph_0(a)}
}
$$
if $i<0$.
For each $\ga \in (\k ^\times)^{\hat{A}_0}$ and each $z \in A_0$ we set
$$
\bar{\ga}_i(z) := 
\begin{cases}
(\ga_0 \cdots \ga_{i-1} )(x) & \text{if } i>0,  \\
1 & \text{if } i=0, \text{and}\\
((\ga_i \cdots \ga_{-1} )(x))\inv & \text{if } i<0. 
\end{cases}
$$
When $i,j>0$, it follows from these diagrams that 
$$
\ro (y)\ph _i(a)=\ro (y) (\la _0\cdots \la _{i-2}\la _{i-1}(y))^{-1}\la _0\cdots \la _{i-2}\la _{i-1}(x)\ph_0 (a)
=\ro(y) (\bar{\la}_i (y))\inv \bar{\la}_i (x) \ph_0(a)
$$
and similarly
$$
\ps _j(a)\ro (x)=(\mu _0\cdots \mu _{j-2}\mu _{j-1}(y))^{-1}\mu _0\cdots \mu _{j-2}\mu _{j-1}(x)\ps _0(a)\ro (x)
= (\bar{\mu}_j (y))\inv \bar{\mu}_j (x) \ps_0(a) \ro(x).
$$
By the assumption \eqref{eq:comm-ij} we have
$$
(\bar{\la}_i (y))\inv \bar{\mu}_j (y) \ro(y) \ph_0(a)
=(\bar{\la}_i (x) )\inv \bar{\mu}_j (x)  \ro(x) \ps_0(a) .
$$
Looking at this 
we define 
$\ro _0:A_0\to \k ^\times$ by
$$
\ro _0(z): =(\bar{\la}_i (z) )\inv \bar{\mu}_j (z)  \ro(z)
$$
for all $z\in A_0$.
Then \eqref{eq:level0} holds in this case. 
The remaining cases are verified similarly.
\end{proof}

\begin{cor}\label{gen-twist}
Let $\ph $ be an automorphism of $\hat{A}$ with jump $0\ne n \in \bbZ$.
Then $\hat{A}/\ang{\ph }$ and $T_{\ph _0}^n(A)$ are isomorphic as $\bbZ$-graded categories.
\end{cor}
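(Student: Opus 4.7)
The strategy is to apply Theorem~\ref{gme-iso} with $\ps := \hat{\ph_0}\nu_A^n$ and with $\ro_0$ chosen to be the constant function with value $1$. Since by definition $T_{\ph_0}^n(A) = \hat{A}/\ang{\hat{\ph_0}\nu_A^n} = \hat{A}/\ang{\ps}$, the corollary will follow as soon as the three hypotheses of Theorem~\ref{gme-iso} are verified for this choice of $\ps$ and $\ro_0$: that $\ps$ is an automorphism of $\hat{A}$ with jump $n$, that $\ph$ and $\ps$ act identically on the objects of $\hat{A}$, and that the compatibility \eqref{eq:level0} holds for $\ro_0 \equiv 1$.

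First I would check that $\ps$ has jump $n$. By Lemma~\ref{repet-auto}(1), $\hat{\ph_0}$ is an automorphism of $\hat{A}$, and by part~(2)(b)(ii) of the same lemma it sends $A^{[0]}$ to $A^{[0]}$, i.e.\ it has jump $0$. Since $\nu_A^n$ obviously has jump $n$, Proposition~\ref{jump} yields that $\ps = \hat{\ph_0}\nu_A^n$ has jump $n$.

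Next I would verify that $\ph$ and $\ps$ agree on objects of $\hat{A}$. On the one hand, $\ps(x^{[i]}) = \hat{\ph_0}(x^{[i+n]}) = (\ph_0(x))^{[i+n]}$. On the other hand, $\ph$ has jump $n$ so $\ph(x^{[0]}) = y^{[n]}$ for some $y \in A_0$, and Lemma~\ref{lem:com-Nak} (the actions of $\ph$ and $\nu_A$ commute on objects) forces $\ph(x^{[i]}) = y^{[i+n]}$ for every $i \in \bbZ$. Taking $i=-n$ and unraveling the definition $\ph_0 = (\id^{[0]})^{-1}\ph\nu_A^{-n}\id^{[0]}$ identifies $y$ with $\ph_0(x)$, so $\ph$ and $\ps$ indeed coincide on every $x^{[i]} \in \hat{A}_0$.

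Finally, for the compatibility condition I would observe that $\ps\nu_A^{-n} = \hat{\ph_0}$, whence Lemma~\ref{repet-auto}(2)(b)(iii) gives $\ps_0 = (\id^{[0]})^{-1}\hat{\ph_0}\,\id^{[0]} = \ph_0$. With $\ro_0 \equiv 1$ the required equality $\ro_0(y)\ph_0(a) = \ps_0(a)\ro_0(x)$ reduces to the tautology $\ph_0(a) = \ph_0(a)$. Theorem~\ref{gme-iso} then produces the desired isomorphism $\hat{A}/\ang{\ph} \iso \hat{A}/\ang{\ps} = T_{\ph_0}^n(A)$ of $\bbZ$-graded algebras (we are in the case $n \ne 0$). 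Since the entire substance has been absorbed into Theorem~\ref{gme-iso}, this corollary presents no real obstacle; the only subtlety is the bookkeeping identifying $\ps_0$ with $\ph_0$ and verifying the equality of object maps.
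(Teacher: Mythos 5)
Your proposal is correct and takes essentially the same route as the paper: both apply Theorem \ref{gme-iso} to $\ps:=\hat{\ph}_0\nu_A^n$ with $\ro_0\equiv 1$, the whole point being the identity $\ps_0=(\hat{\ph}_0\nu_A^n)_0=\ph_0$. Your extra checks that $\ps$ has jump $n$ and that $\ph$ and $\ps$ agree on objects (via Lemma \ref{lem:com-Nak} and Proposition \ref{jump}) are hypotheses of Theorem \ref{gme-iso} that the paper's two-line proof leaves implicit, so they are welcome bookkeeping rather than a departure.
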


\begin{proof}
By the definition of $\hat{\ph _0}$,
we have $\ph _0=(\hat{\ph }_0\nu _{A}^n)_0$,
which can be regarded as the equality \eqref{eq:level0} with $\ro_0 (x) = 1$ for all $x \in A_0$.
Hence $\hat{A}/\ang{\ph }$ and $T_{\ph _0}^n(A)$ are isomorphic as $\bbZ$-graded categories by Theorem \ref{gme-iso}.
\end{proof}
\begin{exm}\label{exm:ext}
Let $A$ be an algebra defined by the quiver
$$
\xymatrix{
1 \ar[r]^\al &2 \ar[r]^\be &3. \\
}
$$
Then $\hat{A}$ is given by the quiver
$$
\hat{Q}:
\xymatrix{
\cdots \ar[r] &
1^{[-1]} \ar[r]^{\al^{[-1]}} &2^{[-1]} \ar[r]^{\be^{[-1]}} &3^{[-1]} \ar[r]^{\ga^{[-1]}} &
1^{[0]} \ar[r]^{\al^{[0]}} &2^{[0]} \ar[r]^{\be^{[0]}} &3^{[0]}\ar[r] & \cdots\\
} 
$$
with relations $\mu =0$ for all path $\mu$ of length 3.
Let $\ps \in \Aut^0(\hat{A})$ defined by $\ps(\al):= \ps_\al \al$ for all arrows $\al\in \hat{Q}$,
where  $\ps_\al \in \k ^\times $ is given by the first row of the following diagram: 
 $${\tiny
\xymatrix{
\cdots \ar[r] &
1^{[-1]} \ar[r]^{1} &2^{[-1]} \ar[r]^{1} &3^{[-1]} \ar[r]^{1} &
1^{[0]} \ar[r]^{2} &2^{[0]} \ar[r]^{3} &3^{[0]}\ar[r]^{1}& 1^{[1]} \ar[r]^{1} &2^{[1]} \ar[r]^{1} &3^{[1]} \ar[r] &\cdots\\
\cdots \ar[r] &
1^{[-1]} \ar[r]^{2} &2^{[-1]} \ar[r]^{3} &3^{[-1]} \ar[r]^{1/6} &
1^{[0]} \ar[r]^{2} &2^{[0]} \ar[r]^{3} &3^{[0]}\ar[r]^{1/6}& 1^{[1]} \ar[r]^{2} &2^{[1]} \ar[r]^{3} &3^{[1]} \ar[r] & \cdots\\
\ar ^{1}"1,2";"2,2" 
\ar ^{2}"1,3";"2,3" 
\ar ^{6}"1,4";"2,4" 
\ar ^{1}"1,5";"2,5" 
\ar ^{1}"1,6";"2,6" 
\ar ^{1}"1,7";"2,7" 
\ar ^{1/6}"1,8";"2,8" 
\ar ^{1/3}"1,9";"2,9" 
\ar ^{1} "1,10";"2,10" 
} }
\vspace{-24pt}
$$
Set $\ph:= \ps \nu_A^{n}$ with $ 0\ne n\in\bbZ$.
Then $\hat{\ph}_0 \in \Aut^0 (\hat{A})$ is given by $\hat{\ph}_0 (\al):= (\hat{\ph}_0) _\al \al$ for all arrows $\al\in \hat{Q}$,
where  $ (\hat{\ph}_0) _\al  \in \k ^\times $ is given by the second row of the diagram above. 
By Corollary \ref{gen-twist}, 
$\hat{A}/\ang{\ph }$ and $T_{\ph _0}^n(A)$ are isomorphic as $\bbZ$-graded categories.
The map $\ro$ in Theorem \ref{gme-iso} is illustrated as  the vertical arrows  in the diagram above.
\end{exm}

\begin{thm} \label{thm:cyc-orbit}
Let $\ph$ and $\ps$ be automorphisms of $\hat{A}$ with jump $n \in \bbZ$ that coincide on the objects of $\hat{A}$, and 
let $\calS$ be a complete set  of representatives of equivalence classes of simple cycles in $Q$.
Assume that 
\begin{enumerate}
\item
For each $x, y \in Q_0$ and each $\al \in Q_1(x,y)$,
there exists some $c_{x,y} \in \k^\times$
such that
$(\ph_0\inv \ps_0)(\ovl{\al}) = c_{x,y}\ovl{\al}$, and
\item
for each cycle $C = l_n\cdots l_1$ $(l_1, \dots, l_n \in \ovl{Q}_1)$
in $\calS$, we have 
$(\ph_0\inv \ps_0)_C = 1$ $($see Proposition \ref{orbitcat-iso} for the notation$)$.
\end{enumerate} 
Then $(\id_{\hat{A}}, \et )/\bbZ \colon \hat{A}/\ang{\ph} \to \hat{A}/\ang{\ps}$ is an equivalence in $\ZGrCat$ for some natural isomorphism $\et\colon\ps\To\ph$, in particular, it is an isomorphism as $\bbZ$-graded categories when $n\ne 0$.
\end{thm}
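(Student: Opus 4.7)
The plan is to reduce the theorem to the combination of Theorem \ref{gme-iso} and the equivalence of (1) and (2) in Proposition \ref{orbitcat-iso}. First I would observe that the hypotheses (1) and (2) of the theorem are almost exactly conditions (2)(a) and (2)(b) of Proposition \ref{orbitcat-iso}, applied to the automorphisms $g := \ph_0$ and $h := \ps_0$ of the locally bounded category $R := A$. The only gap is that condition (2) of our theorem demands $(\ph_0^{-1}\ps_0)_C = 1$ only for simple cycles $C$ in the chosen set of representatives $\calS$, whereas Proposition \ref{orbitcat-iso}(2)(b) demands this for \emph{every} cycle in $Q$.

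To close this gap I would apply Proposition \ref{tekito}(2) with the choices $\ph_\al := c_{s(\al), t(\al)}$ and $\ps_\al := 1$ for every arrow $\al \in Q_1$: the hypothesis $\ph_S = \ps_S$ for every $S \in \calS$ then says precisely that $(\ph_0\inv\ps_0)_S = 1$ for all such $S$, and the conclusion $\ph_C = \ps_C$ for every cycle $C$ gives $(\ph_0\inv\ps_0)_C = 1$ for every cycle, as recorded in Remark \ref{rmk:simple-cycle}(2). Consequently the full condition (2) of Proposition \ref{orbitcat-iso} is satisfied.

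Next I would invoke the implication (2) $\Rightarrow$ (1) of Proposition \ref{orbitcat-iso} to produce a map $\ro_0 \colon A_0 \to \k^\times$ satisfying
\[
\ro_0(y)\,\ph_0(a) = \ps_0(a)\,\ro_0(x) \qquad (a \in A(x,y),\ x, y \in A_0),
\]
which is exactly the hypothesis \eqref{eq:level0} of Theorem \ref{gme-iso}. Applying Theorem \ref{gme-iso} to this $\ro_0$ then extends $\ro_0$ to a map $\ro \colon \hat{A}_0 \to \k^\times$ satisfying the analogous identity on all of $\hat{A}$, and yields a natural isomorphism $\et \colon \ps \To \ph$ such that $(\id_{\hat{A}}, \et)/\bbZ \colon \hat{A}/\langle\ph\rangle \to \hat{A}/\langle\ps\rangle$ is an equivalence in $\ZGrCat$. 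When $n \ne 0$ the orbit categories have only finitely many objects and are basic, so the equivalence is automatically an isomorphism of $\bbZ$-graded categories, giving the last clause of the theorem.

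The main obstacle is really just bookkeeping: one must check that the scalar-cycle condition for a set of representatives of simple cycles truly propagates to \emph{all} cycles, so that Proposition \ref{orbitcat-iso} can be invoked with its full condition (2). Once this is done, the remaining two steps are direct applications of previously established results and require no further computation.
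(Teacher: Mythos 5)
Your proposal is correct and follows essentially the same route as the paper: the paper's proof likewise applies Proposition \ref{orbitcat-iso} (via its implication $(2)\Rightarrow(1)$, with $g=\ph_0$, $h=\ps_0$) together with Remark \ref{rmk:simple-cycle}(2) --- whose content is exactly the Proposition \ref{tekito}(2) reduction from all cycles to representatives of simple cycles that you spell out --- and then concludes with Theorem \ref{gme-iso}. Your extra detail on propagating the cycle condition and on why the equivalence is an isomorphism when $n\ne 0$ only makes explicit what the paper leaves implicit.
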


\begin{proof}
This follows easily 
by applying Proposition \ref{orbitcat-iso} and Remark \ref{rmk:simple-cycle}(2) to $A$
and by using Theorem \ref{gme-iso}. 
\end{proof}

\begin{exm}\label{exm:2-cycle}
Let $A$ be an algebra defined by the bound quiver
\[
\vcenter{\xymatrix{ 1 & 2 
              \ar @<1mm>^a"1,1";"1,2"
              \ar @<1mm>^b"1,2";"1,1"
}},ab=0=ba.
\]
Define automorphisms $\ph$ and $\ps$ of $A$ as follows:
On $A_0$ we set $\ph =\id _A=\ps$ and
$\ph (a):=\ph _aa$, $\ph (b):=\ph _bb$,
$\ps (a):=\ps _aa$ and $\ps (b):=\ps _bb$
with $\ph _a, \ph _b, \ps _a, \ps _b \in \k ^\times$.
Fix an arbitrary $n \ne 0$.
Then 
the following are equivalent:
\begin{enumerate}
\item
There exists a natural isomorphism $\et\colon\hat{\ps}\To\hat{\ph}$ such that 
$(\id_{\hat{A}}, \nu_A^n \et )/\bbZ \colon T_{\ph }^n(A)\to T_{\ps}^n(A)$ are isomorphisms as $\bbZ$-graded categories; 
\item
There exists a map $\ro_0 :A_0 \to \k ^\times$ satisfying
$\ph _a\ro_0(2)=\ro_0(1)\ps _a$
and
$\ph _b\ro_0(1)=\ro_0(2)\ps _b$; and
\item
$\ph _a\ph _b=\ps _a\ps _b$.
\end{enumerate}

Indeed, by applying the equivalence of (1) and (2) in  Proposition \ref{orbitcat-iso} 
and Remark \ref{rmk:simple-cycle}(2) to $A$, we see that (2) and (3) are equivalent. 

(3) \implies (1).
 This follows by Theorem \ref{thm:cyc-orbit}.

(1) \implies (2). This follows by applying the implication from (4) to (1) in Proposition \ref{orbitcat-iso} to $\hat{A}$. 
\end{exm}

\section{Piecewise hereditary algebras of tree type}
In this section we will apply the results in the previous section to piecewise hereditary algebras of tree type.
We begin with the following lemma.

\begin{lem} \label{lem:fgld-local}
If $B$ is a local algebra of finite global dimension,
then $B$ is isomorphic to $\k$ as an algebra. 
\end{lem}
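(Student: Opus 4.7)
The plan is to deduce this from a short Euler-characteristic argument applied to a minimal projective resolution of the unique simple $B$-module.

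First I would identify the residue field of $B$. Since $B$ is a basic, connected, finite-dimensional $\k$-algebra that is local, the Jacobson radical $J$ of $B$ satisfies $B/J$ is a finite-dimensional division $\k$-algebra. As $\k$ is algebraically closed, this forces $B/J \cong \k$, so the unique simple left $B$-module $S = B/J$ has $\dim_\k S = 1$. Moreover, $B$ is (up to isomorphism) the unique indecomposable projective $B$-module, so every finitely generated projective $B$-module is of the form $B^{n}$ for some $n \geq 0$.

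Next I would exploit the finite global dimension. It is standard (and in our setting immediate from the fact that $S$ is the only simple module) that $\gldim B = \projdim_B S =: d < \infty$. Choose a minimal projective resolution
\begin{equation*}
0 \to P_d \to P_{d-1} \to \cdots \to P_1 \to P_0 \to S \to 0,
\end{equation*}
and write $P_i \cong B^{n_i}$ with $n_i \in \bbN$. Taking alternating sums of $\k$-dimensions along this exact sequence yields the Euler-characteristic identity
\begin{equation*}
\Bigl(\sum_{i=0}^{d} (-1)^i n_i\Bigr) \cdot \dim_\k B \;=\; \dim_\k S \;=\; 1.
\end{equation*}

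Since $N := \sum_{i=0}^{d}(-1)^i n_i$ is an integer and $\dim_\k B$ is a positive integer, the equation $N \cdot \dim_\k B = 1$ in $\bbZ$ forces $\dim_\k B = 1$, and hence $B \cong \k$ as a $\k$-algebra. There is essentially no obstacle here: the one point that must be checked carefully is that $B/J = \k$ (which is where the algebraic closedness of $\k$ is used), since everything else follows formally from the existence of a finite projective resolution built out of copies of $B$.
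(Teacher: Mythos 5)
Your proof is correct and is essentially the paper's own argument: both exploit the fact that over a local algebra every projective module is free, take a finite free resolution of the simple module $B/J\cong\k$ (algebraic closedness being used exactly where you say, to identify $B/J$ with $\k$), and count $\k$-dimensions along it. The paper does the bookkeeping as a congruence modulo $\dim_\k B$ via the syzygies, concluding $J=\Omega^1\k=0$, while you aggregate the same count into the single Euler-characteristic identity $N\cdot\dim_\k B=1$ forcing $\dim_\k B=1$; this is only a difference of packaging, not of method.
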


\begin{proof}
Let $J$ be the Jacobson radical of $B$, $n$ the global dimension of $B$ and set  $d:=\dim_{\k} B$.
Then we have a projective resolution of $\k = B/J$ of the form
$$
0 \to B^{(b_n)} \to \cdots \to B^{(b_1)} \to B \to \k \to 0,
$$
which gives us short exact sequences
$$
0 \to \Om^{i+1} \k \to B^{(b_i)} \to \Om^{i} \k \to 0
$$
for all $i =1,\cdots n$. 
This shows that $\dim_{\k} \Om^{i+1} \k  \equiv -\dim_{\k} \Om^{i} \k  \pmod d\/$.
Hence 
$$
0 = \dim_{\k} \Om^{n+1} \k  \equiv (-1)^n \dim_{\k} \Om^{1} \k \pmod d\/.
$$
Since $0 \leq \dim_{\k} \Om^{1} \k < d$, 
we have $J= \Om^{1} \k =0$. 
\end{proof}

The following lemma enables us to apply the statement above to piecewise hereditary algebras of tree type.

\begin{lem}\label{pwh-nocyc}
A piecewise hereditary algebra has no nonzero oriented cycles.
\end{lem}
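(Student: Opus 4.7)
The plan is to exploit the piecewise hereditary hypothesis by fixing a triangle equivalence $F\colon \Db(\mod A) \isoto \Db(\mod H)$ to some basic hereditary $\k$-algebra $H$, and then to analyse the image $F(Ae)$ of each indecomposable projective $Ae$ in order to control the local algebra $eAe \iso \End_A(Ae)$.

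The key structural input is that $H$ is hereditary, which ensures that every object of $\Db(\mod H)$ is isomorphic to the direct sum of the shifted stalk complexes of its cohomology, so the indecomposable objects of $\Db(\mod H)$ are exactly the shifts $M[n]$ with $M \in \mod H$ indecomposable and $n \in \bbZ$. Since $F(Ae)$ is indecomposable in $\Db(\mod H)$, I can write $F(Ae) \iso M[n]$ for some such pair and deduce
$$eAe \iso \End_{\Db(A)}(Ae) \iso \End_{\Db(H)}(M[n]) \iso \End_H(M).$$
Next, because $F(A) = \bigoplus_e F(Ae)$ is a tilting object of $\Db(\mod H)$ with endomorphism algebra $A$, I have $\Hom_{\Db(H)}(F(A), F(A)[j]) = 0$ for all $j \ne 0$. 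Restricting to the summand $F(Ae) = M[n]$ forces $\Ext^j_H(M,M) = 0$ for every $j \ne 0$, and the hereditary hypothesis reduces this to the single condition $\Ext^1_H(M,M) = 0$, i.e.\ $M$ is rigid.

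The main obstacle is then to upgrade ``rigid and indecomposable'' to $\End_H(M) \iso \k$. I would apply Kac's theorem on dimension vectors of indecomposables over a hereditary algebra together with the Euler form identity
$$\dim_\k \End_H(M) - \dim_\k \Ext^1_H(M,M) = \langle \udim M, \udim M \rangle.$$
Rigidity makes the left-hand side equal to $\dim_\k \End_H(M)$, which is strictly positive; since imaginary roots of the associated Kac--Moody algebra have nonpositive Euler norm, $\udim M$ must be a real root, and real roots satisfy $\langle \alpha, \alpha \rangle = 1$. Hence $\dim_\k \End_H(M) = 1$, so $\End_H(M) \iso \k$, and therefore $eAe \iso \k$ for every primitive idempotent $e$ of $A$. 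An alternative endgame closer in spirit to the statement of Lemma~\ref{lem:fgld-local} is to complete $M$ to a tilting module via Bongartz's lemma, realise $\End_H(M)$ as an idempotent corner of a tilted algebra (which has finite global dimension), and then appeal to Lemma~\ref{lem:fgld-local} once the corner algebra is shown to inherit finite global dimension; this route saves Kac's theorem at the cost of some extra homological bookkeeping on the corner algebra.
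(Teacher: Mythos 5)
Your main argument is correct, but it takes a genuinely different route from the paper. The paper never passes to the hereditary side explicitly: it writes $eAe\iso\End(T_e)$ for an indecomposable summand $T_e$ of the tilting complex $T$ over $H$, invokes \cite[Corollary 5.5]{HKL} to conclude that $\End(T_e)$ is again piecewise hereditary (hence of finite global dimension), and then finishes with Lemma \ref{lem:fgld-local} (local $+$ finite global dimension $\Rightarrow$ $\k$). You instead transport $Ae$ through a triangle equivalence $F\colon\Db(\mod A)\to\Db(\mod H)$, use heredity of $H$ to write $F(Ae)\iso M[n]$ with $M$ indecomposable, deduce rigidity of $M$ from $\Ext^j_A(A,A)=0$ for $j\ne 0$, and conclude $\dim_\k\End_H(M)=1$ via Kac's theorem and the Euler form. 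This is sound over the algebraically closed field $\k$, and it buys a proof that is self-contained modulo standard hereditary representation theory, whereas the paper's proof is shorter but leans on the nontrivial external input from H\"ugel--Koenig--Liu; note you could even avoid Kac's theorem entirely by the Happel--Ringel lemma, which gives directly that a rigid indecomposable module over a hereditary algebra is a brick. Two small caveats: your ``alternative endgame'' via Bongartz completion is not yet a proof, since an idempotent corner $e'Be'$ of an algebra $B$ of finite global dimension need not have finite global dimension (this is exactly the gap that the citation of \cite{HKL} closes in the paper's argument); and strictly speaking $\End_A(Ae)\iso (eAe)\op$, which is harmless here since the conclusion $eAe\iso\k$ is unaffected.
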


\begin{proof}
If $A$ is a piecewise hereditary algebra, then there is a tilting complex $T$ over a hereditary algebra $H$
such that $A\iso \End(T)$.
For each primitive idempotent $e$ in $A$, $eAe$ is isomorphic to $\End(T_e)$
where $T_e$ is a direct summand of $T$.
By \cite[Corollary 5.5]{HKL},
$eAe$ is a piecewise hereditary algebra
because $T_e$ is a partial tilting complex.
Since piecewise hereditary algebras have finite global dimension and $eAe$ is local,
$eAe$ is isomorphic to $\k$ by Lemma \ref{lem:fgld-local}.
Hence $A$ have no nonzero oriented cycles if $A$ is a piecewise hereditary algebra.
\end{proof}

By Corollary \ref{gen-twist} and Lemma \ref{pwh-nocyc} we finally have the following, which gives an
affirmative answer to the conjecture in the introduction.

\begin{cor}\label{cor:answer-conj}
Let $A$ be a piecewise hereditary algebra
and $\ph $ an automorphism of $\hat{A}$ with jump $0 \ne n \in \bbZ$.
Then $\hat{A}/\ang{\ph }$ and $T_{\ph _0}^n(A)$ are isomorphic as $\bbZ$-graded categories.
\end{cor}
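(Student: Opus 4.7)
The plan is to combine the two preceding results directly: Lemma \ref{pwh-nocyc} supplies the ``no nonzero oriented cycles'' hypothesis that Corollary \ref{gen-twist} requires, and the conclusion then follows immediately.

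More precisely, first I would invoke Lemma \ref{pwh-nocyc} to conclude that any piecewise hereditary algebra $A$ satisfies $eAe = \k$ for every $e \in A_0$, i.e., $A$ has no nonzero oriented cycles in the sense of Definition \ref{dfn-nocyc}. This is exactly the standing hypothesis maintained throughout Section 4. Next, I would apply Corollary \ref{gen-twist} to the automorphism $\ph$ of $\hat{A}$ with jump $0 \ne n \in \bbZ$; this yields an isomorphism $\hat{A}/\ang{\ph} \iso T_{\ph_0}^n(A)$ of $\bbZ$-graded categories, which is the desired conclusion.

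There is essentially no obstacle, since all the substantive work is packaged into the two cited results. The only thing to check is that the hypothesis ``piecewise hereditary algebra'' in the statement matches the hypothesis ``piecewise hereditary algebra'' in Lemma \ref{pwh-nocyc} (which in turn relies on \cite[Corollary 5.5]{HKL} together with Lemma \ref{lem:fgld-local}); this is immediate. Note that the tree-type assumption from the original conjecture in the introduction is not needed here, so the corollary actually strengthens the conjecture.
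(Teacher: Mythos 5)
Your proposal is correct and follows exactly the paper's own route: Lemma \ref{pwh-nocyc} shows a piecewise hereditary algebra has no nonzero oriented cycles, and Corollary \ref{gen-twist} then gives the isomorphism $\hat{A}/\ang{\ph} \iso T_{\ph_0}^n(A)$ of $\bbZ$-graded categories. Nothing further is needed.
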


\end{document}